\documentclass[11pt]{article}

\usepackage[affil-it]{authblk}

\usepackage{amsmath,amsfonts,amsthm,amssymb,verbatim,graphicx,color}


%
\theoremstyle{plain}
\newtheorem{claim}[]{Claim}
   \newtheorem{theorem}[equation]{Theorem}
   \newtheorem{corollary}[equation]{Corollary}
   \newtheorem{lemma}[equation]{Lemma}
   
\theoremstyle{definition}
   
\theoremstyle{remark}
   \newtheorem{remark}[equation]{Remark}



\DeclareMathOperator{\conv}{conv}
\DeclareMathOperator{\cone}{cone}

\DeclareMathOperator{\qstab}{QSTAB}
\DeclareMathOperator{\fracc}{FRAC}
\DeclareMathOperator{\stab}{STAB}

\newcommand{\one}{\mathbf{1}} 


\newcommand{\RR}{\mathbb{R}}  

\newcommand{\KK}{\mathcal{K}} 




\title{Lift-and-project ranks of the stable set polytope of joined $a$-perfect graphs}

\author[1]{Bianchi S.}
\author[2]{Escalante M.}
\author[1]{Montelar M.S.}

\affil[1]{Universidad Nacional de Rosario, Argentina}
\affil[2]{CONICET and Universidad Nacional de Rosario, Argentina }
\date{}

\begin{document}
\maketitle



\begin{abstract}
In this paper we study lift-and-project polyhedral operators defined by Lov\'asz and Schrijver  and Balas, Ceria and Cornu\'ejols  on the clique relaxation of the stable set polytope of web graphs. We compute the disjunctive rank of all webs  and consequently of antiweb graphs. We also obtain the disjunctive rank of the antiweb constraints for which the complexity of the separation problem is still unknown. Finally, we use our results to provide bounds of the disjunctive rank of larger classes of graphs as joined $a$-perfect graphs, where near-bipartite graphs belong. 
\end{abstract}
%
%
\section{Introduction}
In this work we study the behavior of lift-and-project operators over the clique relaxation of the stable set polytope of web graphs and use the results for finding lower bounds for the disjunctive rank of a larger classes of graphs as near-bipartite and quasi-line graphs.

Web graphs have circular symmetry of their maximum cliques and stable sets. They belong to the classes of quasi-line graphs and claw-free graphs and are, besides line graphs, relevant for describing the stable set polytope of larger graph classes \cite{GaSa, GiTro,oriolo}.

The lift-and-project operators we analyze are the disjunctive operator defined by Balas, Ceria and Cornu\'ejols in \cite{BCC} and the $N$-operator defined by Lov\'asz and Schrijver in \cite{LoSch91}. It is known that after applying successively these operators to a convex set in $[0,1]^n$ they arrive to the convex hull of integer solutions in this set in at most $n$ iterations \cite{BCC,LoSch91}. These results allow the definition of the \emph{lift-and-project rank} as the minimum number of iterations they need in order to get this convex hull. 

These ranks can be seen as a measure of \emph{how far} a polyhedron is from being integral. One of the main goals of this contribution is to find this measure in the family of web graphs and their complements. In addition, it is interesting to compute bounds for the minimum number of iterations needed for a lift-and-project operator in order to obtain a certain type of inequality to be valid for the corresponding  relaxation. This number, called the \emph{lift-and-project rank of the inequality} can be used for bounding the lift-and-project rank of other families of polyhedra.

In this sense, we compute the disjunctive rank of all webs (see Lemma \ref{rdW2(k+1)+s} and Theorem \ref{rdWnk}). These results also give the disjunctive ranks of their complements, the antiweb graphs. Later we focus on determining the $N$-rank of the inequalities describing the stable set polytope of a particular family of webs (see Theorem \ref{rangodisjP}). Finally, in Theorem \ref{rdFAR} we compute the disjunctive rank of antiweb inequalities and using it, we find bounds for the disjunctive rank of more general families of graphs such as $a$-perfect and joined $a$-perfect graphs.

A preliminary version of some of the results presented in this paper appeared without proofs in \cite{BEM09,BEM_ENDM,BEM_13}.

\section{Preliminaries}

Given $G=(V,E)$ and $v\in V$, the \emph{neighbourhood} of $v$ is $\Gamma(v)=\{u\in V: uv \in E\}$. 
The graph obtained by \emph{deletion} of a set of nodes $U\subseteq V$ is denoted by $G-U$ and corresponds to the subgraph induced by $V \setminus U$. When $U=\{u\}$ we simply write $G-u$. 

A \emph{stable set} in $G$ is a subset of nodes mutually nonadjacent in $G$ and $\alpha(G)$ denotes the cardinality of a stable set of maximum cardinality. A \emph{clique} is a subset of nodes inducing a complete a graph in $G$. We denote by $\omega(G)$ the \emph{clique number} of $G$, the size of a maximum clique in the graph. 

A graph is a \emph{hole} if it is a chordless cycle and is an \emph{antihole}, the complement of a hole. If the number of nodes is odd is called an odd hole or odd antihole, respectively.

The \emph {stable set polytope} of $G$, $\stab(G)$, is the convex hull of the incidence vectors of all stable sets in $G$. A canonical relaxation of $\stab(G)$
 called the \emph{fractional stable set polytope} is defined as
\[
\fracc(G)=\{x\in \RR^V_+: x_i+x_j \leq 1, \text{ for every } ij \in E \}.
\]

A stronger relaxation of the stable set polytope is the \emph{clique relaxation} given by 
\[
\qstab(G)=\{x\in \RR^V_+: x(Q) \leq 1, \text{ for every } Q \text{ clique in } G\}
\]
where $x(U)=\sum\limits_{i\in U} x_i$ for any  $U\subseteq V$. For a  clique $Q$, the inequality $x(Q) \leq 1$ is called a \emph{clique constraint}.

Clearly, $\stab(G)\subseteq \qstab(G)$ for every graph $G$ but equality holds for perfect graphs only \cite {Chv}. A graph is \emph{perfect} if all its node induced subgraphs have the same chromatic and independence numbers \cite{Be60}. 
A graph is called \emph{minimally imperfect}  if it is not perfect but all its proper node induced subgraphs are perfect.
It is known that the only minimally imperfect graphs are the odd holes and their complements \cite{Chud}. 

Web graphs are a natural generalization of the minimally imperfect graphs. More precisely, if $n$ and $k$ are integer numbers with $n\geq 2(k+1)$, a web $W_{n}^k$ is a graph with node set  $\{1,\dots,n\}$ and where $ij$ is an edge if $\left|i-j\right|\leq k$ considering $\{1,\dots,n\}$ as the algebraic group with addition modulo $n$.

If $G=(V,E)$ is a minimally imperfect graph then 
\[
\stab(G)=\qstab(G)\cap \left\{x: x(V)\leq \alpha(G)\right\}
\]
where $x(V) \leq \alpha(G)$ is the \emph{rank constraint} of $G$.

For all imperfect graphs $G$,  $\stab(G)\neq \qstab(G)$ and it is natural to consider the difference between these two polytopes in order to determine \emph{how far} an imperfect graph is from being perfect.
In this context, lift-and-project operators have been widely used in polyhedral combinatorics (see, for instance, \cite{ANE2,BCC,BEN,LiTu03}). 

\textbf{Lift-and-project operators}.  

Starting from a polyhedron $\KK \subseteq [0,1]^n$, these methods attempt to give a description of the convex hull of integer points in it, $\KK^*=\conv(\KK \cap \{0,1\}^n)$ through a finite number of lift-and-project steps.

The \emph{disjunctive} operator is a lift-and-project method which can be characterized  as follows \cite{BCC}: if $j\in \{1,\dots,n\}$, 
\begin{equation}\label{pj}
P_{j}(\KK)=\conv( \KK \cap \{x\in \RR_{+}^{n}:x_{j}\in \{0,1\} \}). 
\end{equation}

The authors prove that this operator can be applied iteratively over a set $F\subseteq \{1,\dots,n\}$ and using \eqref{pj}, it achieves $\KK^*$ in at most $n$ iterations. Then, the \emph{disjunctive rank} of $\KK$,
$r_d(\KK)$, is defined as the smallest cardinality of $F$ for which $P_{F}(\KK) =\KK^*$.

Lov\'asz and Schrijver had previously defined another lift-and-project operator in \cite{LoSch91}, called the $N$-operator.
If $\KK\subseteq [0,1]^{n}$, $\cone(\KK)$ is the polyhedral cone obtained from $\KK$ via homogenization on the new variable $x_0$ (see \cite{Tuncel_bib} for further details).
Let 
\[
\begin{array}{lll}
M(\cone(\KK))= &\{ Y\in\mathbb R^{(n+1)\times(n+1)}:& Y {\rm symmetric }, \; Ye_0=diag(Y), \\
&&  Ye_i\in \cone(\KK), \; Y(e_0-e_i)\in \cone(\KK), \\ 
&& \text{for } \; i=1,\dots,n\},
\end{array}
\]
where $e_{i}$ is  the $i$-th unit vector in $\RR^{n+1}$. 

Projecting this set back onto $\RR^{n+1}$, it results in
\[
N(\cone(\KK))=\{Ye_0:Y\in  M(\cone(\KK))\}.
\]

For simplicity, when we say that we are applying the $N$-operator to some convex set $\KK\subseteq [0,1]^n$, we mean that we consider the cone corresponding to this convex set, apply the lift-and-project procedure, then take the convex subset of $[0,1]^n$ defined by the intersection of this new cone with $x_0=1$. $N(\KK)$ denotes this final subset of $[0,1]^n$ and similarly the relaxations of $\KK^*$ obtained after \emph{applying} this operators in succession.

If $N^r(\KK)$ is the $r$-th iteration of $N$ over $\KK$,  in \cite{LoSch91} it is proved  that
$
N^{n}(\KK) =\KK^*.
$
As for the disjunctive operator, this property allows the definition of $r(\KK)$, the $N$-\emph{rank} of $\KK$, as
the smallest integer $r$
for which $N^{r}(\KK) =\KK^*$. 

It is not hard to see that, for every $j=1,\dots,n$, these relaxations satisfy
\[
\KK^* \subseteq N(\KK) \subseteq \bigcap_{j=1}^n P_{j}(\KK) \subseteq P_{j}(\KK)\subseteq \KK
\]
and  then
\begin{equation}
\label{relacion}
r(\KK)  \leq r_d(\KK).
   \end{equation}

In addition, if  $L$ stands for any of the lift-and-project operators considered herein, the \emph{$L$-rank of a facet constraint} $ax\leq b$ of $\KK^*$ is the minimum number of steps $r$ needed to obtain $ax\leq b$ as a valid inequality for $L^r(\KK)$.

Clearly, if $ax\leq b$ is a facet constraint of $\KK^*$, its $L$-rank is at most the $L$-rank of the relaxation.

In the following sections we apply the above defined lift-and-project operators over the clique relaxation of the stable set polytope in a graph $G$, i.e., $\KK=\qstab(G)$ and $\KK^*=\stab(G)$. Then, in order to simplify the notation, we write $P_j(G)$ and $N^k(G)$ for $P_j(\qstab(G))$ and  $N^k(\qstab(G))$, respectively. Similarly, $r_d(G)$ and  $r(G)$  denote their corresponding ranks and we refer to them as the \emph{disjunctive rank}  and the \emph{$N$-rank} of $G$.

Due to the relationship between the corresponding relaxations we have that, in general,

\begin{equation}\label{rd}
r_{d}(G)\leq r_{d}(\fracc(G))\quad  \text{ and } \quad r(G)\leq r(\fracc(G)).
\end{equation}
Moreover, using the results in \cite{LoSch91}, the $N$-ranks of $\qstab(G)$ and $\fracc(G)$ are related through the clique number of $G$. Actually,   
\begin{equation}\label{frac}
  r(\fracc(G)) \leq  r(G) + \omega(G)-2.
\end{equation}

In \cite{Nasini} it is shown that the disjunctive rank of a graph $G$, namely $r_d(G)$, can be easily described by taking its combinatorial structure into account.

\begin{theorem}[\cite{Nasini}] \label{gra} Given a graph $G$, the disjunctive rank of a graph $G$ coincides with the minimum number of nodes that must be deleted from $G$ in order to obtain a perfect graph.
\end{theorem}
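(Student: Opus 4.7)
The plan is to prove $r_d(G) = d$, where $d$ denotes the minimum number of nodes whose deletion from $G$ produces a perfect graph, by showing both inequalities. The starting point is the iteration identity
\[
P_F(\qstab(G)) = \conv\Bigl(\bigcup_{\sigma \in \{0,1\}^F} \qstab(G) \cap \{x : x_i = \sigma_i \text{ for all } i \in F\}\Bigr),
\]
which follows by unfolding the definition \eqref{pj} and using that the coordinates lie in $[0,1]$, and which reduces the argument to analyzing each slice $\qstab(G) \cap \{x_F = \sigma\}$.

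For $d \leq r_d(G)$, I would argue by contradiction. Assume $P_F(\qstab(G)) = \stab(G)$ but that $G-F$ is not perfect, and take $x^* \in \qstab(G-F) \setminus \stab(G-F)$. Extending $x^*$ by zeros on the coordinates in $F$ yields a point $y$ in the slice $\qstab(G) \cap \{x_F = 0\}$, because each clique constraint of $G$ applied to $y$ reduces to a clique constraint of $G-F$ applied to $x^*$. Hence $y \in P_F(\qstab(G))$. However, $\stab(G) \cap \{x_F = 0\}$ coincides with the zero-extension of $\stab(G-F)$, so $y \notin \stab(G)$, a contradiction.

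For $r_d(G) \leq d$, I would take $F$ with $|F| = d$ and $G-F$ perfect and prove $P_F(\qstab(G)) \subseteq \stab(G)$. For each $\sigma \in \{0,1\}^F$, the slice is empty unless the set $S^1 = \{i \in F : \sigma_i = 1\}$ is stable in $G$. When it is, a case analysis on clique constraints of $G$ (splitting according to whether a clique meets $S^1$ or not) shows that the slice consists exactly of vectors $y$ with $y_F = \sigma$, $y_j = 0$ for $j \in \Gamma(S^1)\setminus F$, and the restriction of $y$ to $V\setminus(F \cup \Gamma(S^1))$ lying in $\qstab(H_\sigma)$, where $H_\sigma = G - F - \Gamma(S^1)$ is an induced subgraph of $G-F$. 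Since $G-F$ is perfect, so is $H_\sigma$, whence $\qstab(H_\sigma) = \stab(H_\sigma)$, and every point of the slice becomes a convex combination of incidence vectors of stable sets of $G$. Taking convex hulls over $\sigma$ yields $P_F(\qstab(G)) \subseteq \stab(G)$.

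The main technical obstacle is the slice description used in the previous step: one must verify that clique constraints of $G$ meeting $S^1$ at a (necessarily unique) vertex force $y_j = 0$ for every $j \in \Gamma(S^1)\setminus F$, and that the remaining clique constraints pull back exactly to the clique constraints of the induced subgraph $H_\sigma$. Once this polyhedral reduction is in place, the transfer of perfectness from $G-F$ to each induced subgraph $H_\sigma$ makes the conclusion routine.
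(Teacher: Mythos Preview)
Your argument is correct and well organized: the iteration identity for $P_F$ reduces the problem to slices, and you correctly verify both inclusions via the face structure of $\qstab$ and $\stab$ together with the perfectness of the induced subgraphs $H_\sigma$. The one point worth making explicit is that in the slice description each $j\in Q\setminus\{i\}$ for a clique $Q$ meeting $S^1$ at $i$ is automatically a neighbour of $i$, hence lies in $\Gamma(S^1)$, so $y(Q)=1$ exactly; you allude to this but it deserves one line.

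However, there is nothing in the present paper to compare your proof with: Theorem~\ref{gra} is quoted from \cite{Nasini} and no proof is reproduced here. So while your write-up is a valid self-contained proof, the comparison the assignment asks for cannot be carried out against this paper alone; one would have to consult \cite{Nasini} to see whether the original argument proceeds via the same slice analysis or through a different route.
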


In \cite{ANE2} it is proved that:

\begin{theorem}[\cite{ANE2}]\label{complemento}
The disjunctive rank of a graph coincides with the disjunctive rank of its complement.
\end{theorem}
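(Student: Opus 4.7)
The plan is to exploit Theorem~\ref{gra}, which reduces the computation of the disjunctive rank to a purely combinatorial quantity: the minimum number of nodes whose deletion leaves a perfect graph. Once we have that characterization, the statement should follow from the Perfect Graph Theorem of Lov\'asz, which asserts that a graph is perfect if and only if its complement is perfect.

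Concretely, I would proceed as follows. Denote by $\delta(G)$ the minimum cardinality of a set $U \subseteq V(G)$ such that $G - U$ is perfect; by Theorem~\ref{gra}, $r_d(G) = \delta(G)$ and likewise $r_d(\overline{G}) = \delta(\overline{G})$. The key observation is that for every $U \subseteq V(G) = V(\overline{G})$, the subgraph of $\overline{G}$ induced by $V \setminus U$ is precisely the complement of the subgraph of $G$ induced by $V \setminus U$; in symbols, $\overline{G} - U = \overline{G - U}$. Combining this with the Perfect Graph Theorem, $G - U$ is perfect if and only if $\overline{G} - U$ is perfect. Hence the families of node sets witnessing perfection of $G - U$ and of $\overline{G} - U$ coincide, so their minima agree: $\delta(G) = \delta(\overline{G})$, which gives $r_d(G) = r_d(\overline{G})$.

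I do not expect any real obstacle here, since both ingredients are already packaged: Theorem~\ref{gra} translates the analytic statement into a combinatorial one, and the Perfect Graph Theorem supplies the complementation symmetry of perfection. The only point that requires care is verifying the identity $\overline{G} - U = \overline{G - U}$, but this is immediate from the definitions of induced subgraph and complement. Consequently the entire argument is essentially a one-line reduction, and no separate treatment of the two operators or of specific graph families is needed.
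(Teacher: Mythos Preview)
Your argument is correct. The paper does not supply its own proof of Theorem~\ref{complemento}; it simply quotes the result from \cite{ANE2}. So there is nothing in the present paper to compare your proof against line by line.

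That said, it is worth noting that your route is a shortcut that relies on Theorem~\ref{gra} (from \cite{Nasini}) together with the Perfect Graph Theorem. The original result in \cite{ANE2} is proved in a different spirit: there the authors work directly with the disjunctive operator $P_j$ and show, via an antiblocker duality argument, that the disjunctive index is invariant under graph complementation without passing through the combinatorial characterization of Theorem~\ref{gra}. Your approach is shorter once Theorem~\ref{gra} is available, but it hides the polyhedral reason for the symmetry; the approach in \cite{ANE2} is more self-contained at the level of the operator and in fact yields Theorem~\ref{gra} as a byproduct rather than using it as input. Either way, within the logical structure of the present paper your derivation is valid, since both Theorem~\ref{gra} and the Perfect Graph Theorem are taken as established facts.
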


\section{Lift-and-project operators on the clique relaxation of the stable set polytope of web graphs}\label{capredes}

In this section we present our main results on the behaviour of the disjunctive operator over all webs and compare it with the $N$-operator over a particular family of webs.

Observe that for an integer number $k\geq 2$, $W^1_{2k+1}$ is an odd hole and $W^{k-1}_{2k+1}$ an odd antihole.

If the web $W_{n'}^{k'}$ is a subgraph of $W_n^k$, it is called a \emph{subweb} and we write $W_{n'}^{k'}\subseteq W_n^k$. A subweb is \emph{proper} when it is a proper subgraph, i.e., when $n'<n$. In \cite{Trotter} Trotter presented necessary and sufficient conditions for a graph to be a subweb of a given web. 

The next result provides a characterization of subwebs. 

\begin{lemma}[\cite{Trotter}]\label{Tro1}
Given $k$ and $n\geq 2(k+1)$, the web  $W_{n'}^{k'}$ is a subweb of $W_n^k$ if and only if these numbers satisfy
\[
n\frac{k'}{k}\leq n'\leq \frac{k'+1}{k+1}n.
\]
\end{lemma}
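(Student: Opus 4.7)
The plan is to re-encode the subweb relation as a choice of $n'$ vertices from $\{1,\ldots,n\}$ placed around the cycle. If $W_{n'}^{k'}$ embeds as an induced subgraph of $W_n^k$ on $S=\{i_1<i_2<\cdots<i_{n'}\}$, one may cyclically relabel the $i_j$ so that consecutive-in-$S$ vertices correspond to consecutive vertices of $W_{n'}^{k'}$. Setting the cyclic gaps $g_j=i_{j+1}-i_j\pmod n$ with $\sum_{j=1}^{n'}g_j=n$, the subweb condition translates into
\[
\sum_{t=j}^{j+k'-1} g_t \le k \qquad\text{and}\qquad \sum_{t=j}^{j+k'} g_t \ge k+1 \qquad (\text{indices cyclic mod } n')
\]
for every $j$: the first encodes that $i_j$ and $i_{j+k'}$ are adjacent in $W_n^k$, the second that $i_j$ and $i_{j+k'+1}$ are not (the complementary arc is automatically long enough because $n\ge 2(k+1)$).

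For the necessity direction, I would sum each inequality over $j=1,\ldots,n'$. Each gap $g_t$ appears $k'$ times in the first sum and $k'+1$ times in the second, so using $\sum_t g_t=n$ one obtains $k'n\le kn'$ and $(k'+1)n\ge (k+1)n'$, which rearrange to the two displayed bounds on $n'$.

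For sufficiency, given the two bounds, I would define $i_j=\lfloor jn/n'\rfloor$ for $j=0,1,\ldots,n'-1$, so that the gaps only take the values $\lfloor n/n'\rfloor$ and $\lceil n/n'\rceil$. The standard estimate $\lfloor b\rfloor\le\lfloor a+b\rfloor-\lfloor a\rfloor\le\lceil b\rceil$ gives
\[
i_{j+k'}-i_j \;\le\; \lceil k'n/n'\rceil \;\le\; k \qquad\text{and}\qquad i_{j+k'+1}-i_j \;\ge\; \lfloor(k'+1)n/n'\rfloor \;\ge\; k+1,
\]
directly from the hypothesis (since $k$ and $k+1$ are integers). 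Non-adjacency of $i_j$ and $i_{j+s}$ for the remaining indices $s\in\{k'+1,\ldots,n'-k'-1\}$ follows by the same estimate applied to whichever of the two arcs contains at least $k'+1$ of the gaps, together with $n\ge 2(k+1)$ to control the opposite arc.

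The main obstacle is the bookkeeping around the cyclic indexing. In the necessity part one must justify that some isomorphism of $W_{n'}^{k'}$ onto the induced subgraph on $S$ respects cyclic order, so that the gap reformulation of the adjacency conditions is valid; a clean way to do this is to observe that the web structure of $W_{n'}^{k'}$ forces the pairs of neighbors with the largest common neighborhood to be cyclic successors. In the sufficiency part, the verification must cover every $s$ in the non-adjacency range, which requires comparing both the forward arc $i_{j+s}-i_j$ and the backward arc $n-(i_{j+s}-i_j)$ against the threshold $k+1$.
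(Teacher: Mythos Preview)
The paper does not supply its own proof of this lemma: it is quoted verbatim from Trotter~\cite{Trotter} and immediately followed by ``Now we focus on the disjunctive rank of webs,'' with no argument in between. So there is nothing in the paper to compare your proposal against.

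That said, your sketch is the standard argument and is sound. For necessity, the averaging step (summing the $k'$-gap and $(k'+1)$-gap inequalities over all starting indices and using $\sum g_t=n$) is exactly how Trotter obtains the two bounds. For sufficiency, the explicit placement $i_j=\lfloor jn/n'\rfloor$ together with the estimate $\lfloor b\rfloor\le\lfloor a+b\rfloor-\lfloor a\rfloor\le\lceil b\rceil$ delivers the required adjacency and non-adjacency directly from the integrality of $k$ and $k+1$. The only genuine subtlety is the one you already flag: that an abstract isomorphism $W_{n'}^{k'}\to G[S]$ must send the cyclic order of $W_{n'}^{k'}$ to the cyclic order inherited from $W_n^k$ (up to reflection). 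Your proposed fix via common-neighbourhood sizes works when $n'>2(k'+1)$; in the degenerate case $n'=2(k'+1)$ the graph $W_{n'}^{k'}$ is a cocktail-party graph with a larger automorphism group, but then the gap inequalities are symmetric enough that any ordering of $S$ will do. None of this is addressed in the present paper, which simply takes the lemma as input.
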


Now we focus on the disjunctive rank of webs. There are particular webs whose disjunctive rank is known. For instance, form Theorem \ref{gra},   $r_d(W_{2p}^1)=0$ since $W_{2p}^1$ is a perfect graph and $r_d(W_{2p+1}^1)=1$ since $W_{2p+1}^1$ is a minimally imperfect graph. 

In what follows we study web graphs $W_n^k$ with $k \geq 2$ and $n \geq 2(k+1)$.  

\begin{lemma}\label{rdW2(k+1)+s}
If  $k \geq 2$ and $0\leq s \leq k$ then $r_d(W_{2(k+1) + s}^k) = s$.  
\end{lemma}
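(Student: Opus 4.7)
My plan is to invoke \tref{Theorem}{gra}, which identifies $r_d(G)$ with the minimum number of vertices whose removal from $G$ yields a perfect graph. Proving the lemma therefore reduces to showing that this minimum equals $s$ for $G=W_{n}^{k}$ with $n=2(k+1)+s$, and I split the argument into the matching upper and lower bounds.

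For the upper bound $r_d(G) \le s$, I would delete any $s$ consecutive vertices of the circular arrangement of $W_n^k$, say $U=\{1,\ldots,s\}$, and verify that the induced subgraph $H$ on $V_H=\{s+1,\ldots,s+2(k+1)\}$ is perfect. Two vertices $i<j$ of $V_H$ are adjacent in $H$ iff $j-i \le k$ or $j-i \ge n-k=k+2+s$. Partition $V_H = A \cup B$ with $A=\{s+1,\ldots,s+k+1\}$ and $B=\{s+k+2,\ldots,s+2(k+1)\}$: each block consists of $k+1$ consecutive integers, so any two vertices of the same block differ by at most $k$ and are adjacent in $H$. All non-edges of $H$ therefore join $A$ to $B$, so $\overline{H}$ is bipartite, hence perfect; by the Perfect Graph Theorem, $H$ itself is perfect.

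For the lower bound $r_d(G) \ge s$, I would show that for every $U\subseteq V(G)$ with $|U|\le s-1$ the graph $G-U$ still contains an induced odd hole or odd antihole. I expect this to be the main obstacle, because removals $U$ fall into many non-equivalent orbits under the cyclic automorphism group of $W_n^k$ (one orbit for each cyclic composition $(g_1,\ldots,g_{2k+3})$ of $s-1$ describing the gaps $U$ leaves between consecutive surviving vertices), and a uniform argument is required. My intended strategy is to apply \tref{Lemma}{Tro1} to enumerate the odd-antihole subwebs $W_{2p+1}^{p-1}$ of $W_n^k$ (which exist for every integer $p$ with $(k+1)/s \le p \le (3k+s+2)/(s+2)$), together with the $C_5$-subwebs that appear once $s \ge (k+1)/2$, and then argue by a covering-type computation that the total number of their cyclic translates is large enough that no vertex subset of size $s-1$ can meet all of them. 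Producing, for every admissible gap composition, an explicit minimally imperfect induced subgraph of $G-U$ then delivers the matching lower bound and closes the proof.
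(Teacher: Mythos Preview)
Your upper bound is correct and matches the paper's: deleting $s$ consecutive vertices leaves a graph whose complement is bipartite, hence perfect, so \tref{Theorem}{gra} gives $r_d\le s$.

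Your lower bound, however, is only a plan, not a proof. You propose to enumerate the odd-antihole subwebs via \tref{Lemma}{Tro1} and then run a ``covering-type computation'' showing that no $(s-1)$-set of vertices meets them all, but none of this is carried out, and you yourself flag it as ``the main obstacle''. To make the argument go through you would have to count, for each admissible $p$, all embeddings of $W_{2p+1}^{p-1}$ into $W_{2(k+1)+s}^k$ (not merely the cyclic translates of one fixed embedding), bound how many pass through any given vertex, and verify that the resulting ratio exceeds $s-1$ uniformly in $(k,s)$ with $0\le s\le k$. That analysis is absent, and it is not clear a priori that a pure counting bound is sharp enough across the whole range.

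The paper bypasses this entirely. For the lower bound it quotes from \cite{PF} that $r(\fracc(W_{2(k+1)+s}^k)) = k+s-1$, and then chains \eqref{relacion} and \eqref{frac} together with $\omega(W_n^k)=k+1$ to obtain
\[
r_d(W_{2(k+1)+s}^k)\;\ge\; r(W_{2(k+1)+s}^k)\;\ge\; r(\fracc(W_{2(k+1)+s}^k))-(k-1)\;=\;s
\]
in one line. The direct combinatorial attack you sketch is closer in spirit to the paper's proof of \tref{Theorem}{rdWnk} (the range $n\ge 3k+2$), which is considerably more involved; for the small-$n$ regime of this lemma the paper deliberately takes the shortcut through the $N$-rank of the fractional relaxation rather than hunting for surviving odd holes.
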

\begin{proof}
Let $G$ be the graph that results after deleting $s$  consecutive nodes of $W_{2(k+1) + s}^k$. Clearly $\bar{G}$, the complement of $G$, is a bipartite graph and then it is perfect. Applying Theorem \ref{gra} and the fact that $G$ results a perfect graph we obtain $r_d(W_{2(k+1) + s}^k)\leq s$.

Now, in \cite{PF} it is proved that if $0 \leq s \leq k$ then $$r(\fracc(W_{2(k+1)+s}^k)) = k+s-1.$$ Using \eqref{rd},  \eqref{frac} and the fact that $\omega (W_n^k)=k+1$, it follows that 
\[r_d(W_{2(k+1)+s}^k)\geq r(\fracc(W_{2(k+1)+s}^k))- (k-1).\]
Therefore $r_d(W_{2(k+1)+s}^k)\geq s$, and the proof is complete.
\end{proof}

The following result provides a general upper bound for the disjunctive rank of webs and, as a consequence, for their $N$-rank.

\begin{lemma}\label{menork}
For every web graph $W_n^k$ with $k \geq 1$ and $n \geq 2(k+1)$, we have that $r_d(W_n^k)\leq k$.
\end{lemma}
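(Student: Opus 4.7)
The plan is to apply \tref{Theorem}{gra}, which reduces the claim to exhibiting a set of at most $k$ vertices whose deletion from $W_n^k$ leaves a perfect graph. I would take $U$ to be a block of $k$ consecutive vertices, say $U = \{1, \ldots, k\}$, and argue that $W_n^k - U$ is perfect.

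The first step is to identify the subgraph $H := W_n^k - U$ combinatorially. For two surviving nodes $i, j \in \{k+1, \ldots, n\}$ the linear difference satisfies $|i-j| \leq n - k - 1$, so the wrap-around distance $n - |i-j|$ is at least $k+1$ and therefore cannot realize the cyclic minimum. Consequently $i$ and $j$ are adjacent in $H$ iff $|i-j| \leq k$, which identifies $H$ with the $k$-th power of the path on $n-k$ vertices.

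The second step is to verify that this power of a path is perfect. The cleanest route is to exhibit it as an interval graph: assign to vertex $v$ the interval $[v,v+k]$, and observe that two such intervals meet iff $|v_1-v_2|\leq k$. Since interval graphs are perfect, $H$ is perfect, and \tref{Theorem}{gra} yields $r_d(W_n^k) \leq |U| = k$.

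There is no substantive obstacle; the only required insight is to delete a \emph{consecutive} block of $k$ vertices, which is precisely what is needed to break the cyclic structure of the web and expose the underlying interval-graph structure. The hypothesis $n \geq 2(k+1)$ is used only to ensure that enough vertices survive the deletion for the argument to make sense.
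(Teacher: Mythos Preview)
Your argument is correct and shares the paper's key idea: delete a block of $k$ consecutive vertices and verify that the remaining graph is perfect, then invoke \tref{Theorem}{gra}. The difference lies only in how perfection is certified. The paper proceeds polyhedrally: after deleting $k$ consecutive columns from the clique matrix of $W_n^k$, the resulting matrix has the consecutive-ones property, so the corresponding system is integral and hence $\qstab(G_j)=\stab(G_j)$, which forces $G_j$ to be perfect. You instead recognise the remaining graph combinatorially as the $k$-th power of a path, exhibit an explicit interval representation, and appeal to the perfection of interval graphs. The two routes are really two faces of the same fact (a graph is an interval graph precisely when its clique matrix admits a consecutive-ones ordering); your version is arguably more self-contained since it avoids the detour through polyhedral integrality, while the paper's version stays closer to the polytope language used throughout.
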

\begin{proof}

For every $i\in \{1,\dots,n\}$, let $Q_i=\{i,\dots,i+k\}$ denote the maximum clique starting at node $i$ in the web $W_n^k$, where additions are taken modulo $n$. 

Let us consider the matrix $Q$ whose rows are the incidence vectors of $Q_i$ for all  $i\in\{1,\dots,n\}$. If we delete the columns of $Q$ indexed in the set $\{j,\dots,j+k-1\}$, for any $j\in\{1,\dots,n\}$, the resulting matrix has the consecutive ones property.
Then, the polyhedron 
\[
\{x\in \RR^n_+: x(Q_i)\leq 1 \text{ for } i=1,\dots,n\}\cap \{x: x_i=0 \text{ for } i=j,\dots,j+k-1\} 
\]
is integral for every $j\in\{1,\dots,n\}$. 
Since 
\[
\qstab(W^k_n) \subseteq \{x\in \RR^n_+: x(Q_i)\leq 1 \text{ for } i=1,\dots,n\},
\]
it follows that  
\[
\qstab(W_n^k)\cap \{x: x_i=0 \text{ for } i=j,\dots,j+k-1\}
\]
coincides with 
\[
\stab(W_n^k) \cap \{x: x_i=0 \text{ for } i=j,\dots,j+k-1\}.
\]
If $G_j=W_n^k\setminus\{j,\dots,j+k-1\}$ we have that $\qstab(G_j)=\stab(G_j)$ and then $G_j$ is a perfect graph, for any $j\in\{1,\dots,n\}$. Then, after deleting $k$ consecutive nodes in $W_n^k$ we arrive to a perfect graph.
Applying Theorem \ref{gra} the result follows.
\end{proof}

In addition, the bound in Lemma \ref{menork} is actually achieved by all webs $W_n^k$ with at least $3k+2$ nodes. 

\begin{theorem}\label{rdWnk}
If $k\geq 2$ and $n\geq 3k+2$ then $r_d(W_{n}^k)=k$.
\end{theorem}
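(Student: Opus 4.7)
The upper bound $r_d(W_n^k)\leq k$ is immediate from Lemma~\ref{menork}. For the matching lower bound $r_d(W_n^k)\geq k$, Theorem~\ref{gra} reduces the statement to showing that for every $S\subseteq V(W_n^k)$ with $|S|=k-1$ the graph $W_n^k-S$ is imperfect; by the Strong Perfect Graph Theorem \cite{Chud} this amounts to finding an induced odd hole or odd antihole in $W_n^k-S$. The case $n=3k+2$ is already Lemma~\ref{rdW2(k+1)+s} with $s=k$.

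For $n>3k+2$ a monotonicity reduction to the previous case is unavailable: by Lemma~\ref{Tro1}, taking $k'=k$ forces $n'=n$, so $W_{3k+2}^k$ is not a subweb of any larger $W_n^k$. I would therefore exhibit the odd hole or odd antihole directly. Using Lemma~\ref{Tro1}, for $n\geq 3k+2$ the interval $[n/k,\,2n/(k+1)]$ has length $n(k-1)/(k(k+1))$ and contains at least one odd integer $n^*$, giving a subweb $W_{n^*}^1\subseteq W_n^k$. Its $n$ cyclic shifts, under the $\ZZ_n$-symmetry of the web, constitute a family of induced odd holes in which each vertex lies in exactly $n^*$ members. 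The $k-1$ deletions thus destroy at most $(k-1)n^*$ shifts, leaving at least $n-(k-1)n^*$ intact; choosing $n^*$ close to $\lceil n/k\rceil$ makes this quantity strictly positive for $k\in\{2,3\}$.

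For $k\geq 4$ this naive count can fail when $S$ is arranged to hit every cyclic shift of a single $W_{n^*}^1$, and handling this case is the main obstacle of the proof. The plan here is to combine several inequivalent obstruction families: multiple odd values of $n^*$ inside the Trotter interval (when it is long enough to admit more than one), together with odd antiholes coming from subwebs $W_{n'}^{k'}$ with $k'<k$ supplied by Lemma~\ref{Tro1}, using Theorem~\ref{complemento} to translate between web and antiweb viewpoints as needed. Verifying that no single $(k-1)$-set $S$ can simultaneously meet every member of this enlarged collection is the combinatorial heart of the argument, and is expected to proceed via a case analysis on the arc-decomposition of $V(W_n^k)\setminus S$ in the cyclic order.
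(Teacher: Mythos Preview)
Your upper bound and the reduction via Theorem~\ref{gra} match the paper. The gap is in the lower bound for $k\geq 4$: what you present there is a plan, not a proof. You explicitly acknowledge that the cyclic-shift count $n-(k-1)n^*$ can be non-positive and propose to patch this by combining several odd-hole families and odd antiholes, with the verification ``expected to proceed via a case analysis''. That verification is the entire content of the lower bound, and it is absent. Even for $k=3$ your count is delicate: with $n^*$ forced to be odd (and at least $5$) you may have to take $n^*\ge\lceil n/k\rceil+1$, and then $(k-1)n^*<n$ is not automatic for the smallest admissible $n$.

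The paper avoids this counting altogether by replacing your family of $n$ cyclic shifts by a \emph{partition} of $V(W_n^k)$ into exactly $k$ classes: writing $n=sk+r$, the arithmetic progressions $D_j=\{j,j+k,\dots,j+(s-1)k\}$ for $j=r+1,\dots,k$ together with $L_j=D_j\cup\{j+sk\}$ for $j=1,\dots,r$. Since $|F|=k-1$, pigeonhole immediately yields one class disjoint from $F$. Each such class already induces a hole in $W_n^k$ (successive elements differ by $k$, hence are adjacent; elements two steps apart differ by $2k$, non-adjacent because $n>3k$). If the class has odd size, it is the desired odd hole. If it has even size, the paper performs a short local repair: replace one vertex of the class by two nearby vertices of $\bar F$, using that $|F|=k-1$ leaves room in the relevant $k$-intervals $C_t$. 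This local argument occupies only a few lines and works uniformly in $k$; no multi-family or antihole machinery is needed. The key idea you are missing is to organise the candidate holes as a $k$-part partition rather than an $n$-element orbit, so that pigeonhole replaces the union bound.
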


\begin{proof}
Assume that $n=sk+r$ with  $r \in\{0,\dots,k-1\}$ if $s \geq 4$ and $r \in \{3,\dots,k-1\}$  if $s = 3$. Let $C_i=\{i,\dots,i+k-1\}$ for every $i\in\{1,\dots,n\}$, where additions are taken modulo $n$.

If we show that, for every  $F\subseteq\{1,\dots,n\}$ with $\left|F\right|=k-1$, $W_n^k \setminus F$ contains a minimally imperfect graph then, due to Theorem \ref{gra}, we obtain $r_d(W_n^k) \geq  k$.  This fact together with Lemma \ref{menork} prove the theorem.

Therefore, the proof of the theorem relies on the following claim.

\begin{claim}
Let $F\subseteq\{1,\dots,n\}$ with $\left|F\right|=k-1$ and $\bar F$ the complement of $F$. Then, there is an odd set $D\subset \bar F$ that induces an odd hole in $W_n^k \setminus F$.
\end{claim}

\textbf{Proof of the claim}

Let us define $D_j= \{j,j+k,j+2k,\ldots, j+ (s-1) k\}$ for each  $j\in \{1,\ldots,n\}$ where addition is modulo $n$.

Observe that $D_j \cup \{j+s k\}=D_j$ for each $j\in \{1,\ldots,n\}$ if and only if $r=0$ (since in this case $j+s k = j (mod\;n)$).

Let us define $L_j=D_j \cup \{j+s k\}$ for $j\in\{1,\dots,r\}$.

It is clear that the set $\{1,\dots, n\}$ can be partitioned into the  following $k$ sets: $L_j$ for $j\in \{1,\dots,r\}$ and $D_j$ for $j\in\{r+1,\dots,k\}$.

According to the Pigeonhole Principle there is $i\in \{1,\ldots, k\}$ such that either $L_i\cap F=\emptyset$ if $i\in \{1,\ldots,r\}$ or $D_i \cap F=\emptyset$ if $i\in \{r+1,\ldots,k\}$.

Then, in what follows we study the only two possibilities; i.e., the case when $L_i\cap F=\emptyset$ for some $i\in \{1,\dots, n\}$ and the case when  $L_i\cap F\neq \emptyset$ for all $i\in \{1,\dots,n\}$.

\begin{itemize}
\item[a)]
Let us first consider the case when   $L_i\cap F=\emptyset$ for some $i\in \{1,\ldots, n\}$. This case includes either $r=0$ and $s\geq 4$ or $r\neq 0$ and $s\geq 3$, thus $\left|D_i \right|\geq 4$.
   
Then if $\left|L_i\right|$ is odd the set $D=L_i\subset \bar F$ induces an odd hole in $W_n^k$ and the claim follows.

Assume that $\left|L_i \right|$ is even.  

If there is $t\in D_i$ such that $\left|C_t\cap F\right|=k-1$ then $\bar F= (\{1,\ldots,n\}\setminus C_t) \cup \{t\}$.
In this case there are many ways to find a subset $D$ of nodes inducing an odd hole in $W_n^k\setminus F$. For instance, if $t\notin\{i+(s-2)k, i+(s-1)k\}$ then $D=(D_i\setminus\{t+2k\})\cup \{t+2k-1,t+2k+1\}$. Otherwise if $t\in\{i+(s-2)k, i+(s-1)k\}$ then $D=(D_i\setminus\{t-k\})\cup \{t-1,t-k-1\}$. Hence, the claim follows.

Now, consider $\left|C_t\cap F\right|<k-1$ for all $t\in D_i$. Let  $i+l\in C_i\cap \bar F$ be such that $\{i+l+1,\ldots,i+k-1\}\subset F$, i.e., $i+l$ is the farthest node from node $i$ that also belongs to $\bar F\cap C_i$. Observe that $l\in \{1,\ldots,k-1\}$.

If $\left|C_{i+l+1}\cap F\right|<k-1$ then there is $m\in\{1,\dots,l\}$ such that $i+k+m\in \bar F$. Hence the set $D=(D_i\setminus \{i+k\})\cup \{i+l,i+k+m\}$ induces an odd hole in $W_n^k\setminus F$. 

Otherwise, if $\left|C_{i+l+1}\cap F\right|=k-1$ then the set $D=(D_i\setminus \{i+2k\})\cup \{i+2k-1,i+2k+1\}$ induces the odd hole needed to prove the claim.

\item[b)]
Assume that $L_i\cap F\neq \emptyset$ for all $i\in \{1,\ldots,n\}$. Then $r\neq 0$.
W.l.o.g assume that $D_1 \cap F=\emptyset$ and $\{1+s k\}\in F$. Then, it holds that $L_i\cap F\neq \emptyset$ for $i\in\{2,\dots,r\}$.
Again by the pigeonhole principle there must be $j\in\{r+1,\dots,k\}$ such that $D_j \cap F=\emptyset$ and in this case $j+sk \in F$. Observe that $j+sk = j-r\, mod \; n$ and $1<j-r<r$. Thus $j+sk \in C_1$ and $|1-(j+(s-1)k)|<k$. 

Let $D'=\{1\}\cup D_j$. If $|D'|$ is odd then we can consider $D=D'$ and the claim follows.

On the other hand, if $|D'|$ is even, since $\{1+s k, j+sk\}\in F$, then $|C_j\cap F|\leq k-3$. Therefore there must be $m\in\{1,\dots,k\}$ such that $j+m \in \bar F\cap C_{1+k}$.
Hence we can define $D=\{1,j+m,1+2k\} \cup (D_j \setminus \{j+k\})$ and the proof is complete.
\end{itemize}
\end{proof}

\begin{remark}\label{punto}
In \cite{BEM_ENDM} we proved that the $N$-rank of the web $W_{s(k+1)+k}^k$  is also $k$. For this purpose, we showed the existence of a point in $ N^{k-1}(W_{s(k+1)+k}^k)$ violating the rank inequality, valid for $\stab(W_{s(k+1)+k}^k)$.

This result, together with the previous theorem exhibits an infinite  family of webs where the two ranks coincide, i.e., $r_d(W_{s(k+1)+k}^{k})=r(W_{s(k+1)+k}^{k})$, for $s,k\geq 2$.
\end{remark}

Next, we see that most of web graphs have a member of the above family as a subweb thus giving a lower bound for the $N$-rank. More precisely,

\begin{corollary}\label{cotaN}
Let $n=s(k+1)+r$ with $k\geq 2$, $s\geq 3$ and $0\leq r\leq k-1$. Then $r(W_n^k)\geq k-t$ where $t=\left\lceil\frac{k(1+r)}{r+s} \right\rceil$. 
\end{corollary}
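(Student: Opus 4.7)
The plan is to produce a node-induced subweb of $W_n^k$ belonging to the family whose $N$-rank was determined in Remark~\ref{punto}, and then invoke the standard monotonicity of the $N$-rank with respect to induced subgraphs. Concretely, set $k'=k-t$ and $s'=s-1$; since $s\geq 3$ we have $s'\geq 2$, and whenever $k-t\geq 2$ Remark~\ref{punto} gives $r(W_{s'(k'+1)+k'}^{k'})=k-t$. (If $k-t=1$ the target subweb is an odd hole of length at least five, for which $r=1$, and if $k-t\leq 0$ the statement is vacuous.)

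The first step is to verify, via Lemma~\ref{Tro1}, that $W_{s'(k'+1)+k'}^{k'}$ is indeed a subweb of $W_n^k$. Writing $n=s(k+1)+r$ and $n'=s'(k'+1)+k'=s(k-t)+(s-1)$, the upper Trotter inequality $n'\leq n(k'+1)/(k+1)$ expands to the trivial bound $-1\leq r(k-t+1)/(k+1)$. The lower Trotter inequality $n'\geq nk'/k$, after multiplying through by $k$ and cancelling the common term $sk(k-t)$, becomes
\[
(s+r)\,t\geq k(r+1),
\]
which is precisely the defining inequality of $t=\lceil k(r+1)/(r+s)\rceil$. Thus $W_{s'(k'+1)+k'}^{k'}$ appears as a node-induced subgraph of $W_n^k$.

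The final step invokes the standard monotonicity of the $N$-rank under vertex deletion: for any $U\subseteq V(G)$, one has $r(G-U)\leq r(G)$, because $\qstab(G-U)$ coincides, up to zero-padding the coordinates in $U$, with the face $\qstab(G)\cap\{x_v=0\colon v\in U\}$, and the Lov\'asz--Schrijver iteration restricted to this face attains $\stab(G-U)$ whenever $N^\ell(G)=\stab(G)$. Applying monotonicity with $H=W_{s'(k'+1)+k'}^{k'}$ and combining with Remark~\ref{punto} yields $r(W_n^k)\geq k-t$, as required. The only real work is the arithmetic check of Trotter's bounds, and this is made routine by the calibration of $(k',s')=(k-t,s-1)$, under which the lower Trotter bound transcribes \emph{exactly} into the defining inequality for $t$; this tight fit is what makes $\lceil k(r+1)/(r+s)\rceil$ the natural choice and is the only non-routine observation in the argument.
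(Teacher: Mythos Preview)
Your proposal is correct and follows essentially the same approach as the paper: choose $k'=k-t$, $n'=(s-1)(k'+1)+k'$, verify via Lemma~\ref{Tro1} that $W_{n'}^{k'}$ is a subweb of $W_n^k$, then invoke Remark~\ref{punto} together with monotonicity of the $N$-rank under induced subgraphs. Your treatment is in fact a bit more careful than the paper's, since you spell out the Trotter arithmetic explicitly and handle the boundary case $k-t=1$ (where Remark~\ref{punto}, stated for $k\geq 2$, does not directly apply and one needs the odd-hole argument instead).
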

\begin{proof}
Firstly observe that if $t=\left\lceil\frac{k(1+r)}{r+s} \right\rceil$ then $t\leq k-1$.

After Trotter's formula, it is easy to prove that if $k'=k-t$ and  $n'=(s-1)(k'+1)+k'$ then $W^{k'}_{n'}$ is a subweb of $W_n^k$. According to Remark \ref{punto} we obtain that $r(W_{n'}^{k'})=k'$ and then $k-t\leq r(W_n^k)$.
\end{proof}

Let us now make use of these results in order to determine the rank of the complementary graphs of webs, called \emph{antiwebs}. For simplicity, we denote by $A_n^k$ the antiweb obtained as the complement of $W_n^{k-1}$.
Theorems \ref{complemento} and \ref{rdWnk} allow us to compute the disjunctive rank of antiwebs.

\begin{corollary}\label{antiweb}
Let $k \geq 2$. If $s\in\{0,\dots,k\}$ then $r_d(A_{2(k+1) + s}^{k+1}) = s$ and if $n\geq 3k+2$ then $r_d(A_{n}^{k+1})=k$.
\end{corollary}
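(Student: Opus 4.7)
The plan is to derive both equalities as immediate consequences of the complement-invariance of the disjunctive rank (Theorem \ref{complemento}) combined with the two preceding results on webs.

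By the definition of antiwebs in the paragraph preceding the statement, $A_n^{k+1}$ is the complement of $W_n^{k}$. In particular, $A_{2(k+1)+s}^{k+1} = \overline{W_{2(k+1)+s}^{k}}$ and $A_n^{k+1} = \overline{W_n^{k}}$. Therefore Theorem \ref{complemento} yields
\[
r_d(A_{2(k+1)+s}^{k+1}) = r_d(W_{2(k+1)+s}^{k}) \qquad \text{and} \qquad r_d(A_n^{k+1}) = r_d(W_n^{k}).
\]

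For the first identity, I would simply invoke Lemma \ref{rdW2(k+1)+s}, whose hypotheses $k\ge 2$ and $0\le s\le k$ match those of the corollary, to conclude $r_d(W_{2(k+1)+s}^{k}) = s$, and hence $r_d(A_{2(k+1)+s}^{k+1}) = s$. For the second identity, the hypotheses $k\ge 2$ and $n\ge 3k+2$ coincide with those of Theorem \ref{rdWnk}, which gives $r_d(W_n^{k}) = k$, and consequently $r_d(A_n^{k+1}) = k$.

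There is no real obstacle here: the work has already been done in Lemma \ref{rdW2(k+1)+s} and Theorem \ref{rdWnk}, and Theorem \ref{complemento} transfers these values to the complementary graphs without further effort.
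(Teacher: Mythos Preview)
Your proof is correct and matches the paper's approach exactly: the paper presents this corollary without a separate proof, simply noting that ``Theorems~\ref{complemento} and~\ref{rdWnk} allow us to compute the disjunctive rank of antiwebs,'' which is precisely the combination of complement-invariance with the web results (Lemma~\ref{rdW2(k+1)+s} and Theorem~\ref{rdWnk}) that you spell out.
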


\section{Lift-and-project rank of facets of the stable set polytope of webs} 

Let us consider the rank constraint associated with the web $W_{s(k+1)+k}^k$, mentioned in Remark \ref{punto}, a particular family where the disjunctive and the $N$-rank coincide. 


\begin{lemma}\label{fullrank}
If  $\pi$ is the rank constraint associated with  $W_{s(k+1)+k}^k$, i.e.,
\[ \pi: x(V(W_{s(k+1)+k}^k ))\leq  s,\]
then $r(\pi) = r_d(\pi) = k$.
\end{lemma}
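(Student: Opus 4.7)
My plan is to sandwich both ranks between matching upper and lower bounds of $k$, using two ingredients already present in the paper.

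For the upper bound on $r_d(\pi)$, I would invoke Lemma \ref{menork} directly. It gives $r_d(W_{s(k+1)+k}^k)\leq k$, i.e.\ there exists $F\subseteq V(W_{s(k+1)+k}^k)$ with $|F|=k$ such that $P_F(\qstab(W_{s(k+1)+k}^k))=\stab(W_{s(k+1)+k}^k)$. Since $\pi$ is valid for $\stab(W_{s(k+1)+k}^k)$, it is valid for $P_F(\qstab(W_{s(k+1)+k}^k))$, and therefore $r_d(\pi)\leq k$.

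To transport this bound to the $N$-rank I would establish $r(\pi)\leq r_d(\pi)$ for any valid inequality of $\stab(G)$. This follows by an easy induction from the inclusion $N(\KK)\subseteq \bigcap_{j=1}^n P_j(\KK)$ already recorded in the excerpt together with the monotonicity of the $N$-operator: if $N^{r-1}(\KK)\subseteq P_{F'}(\KK)$ with $|F'|=r-1$, then for any $j_r$,
\[
N^r(\KK)\subseteq N(P_{F'}(\KK))\subseteq P_{j_r}(P_{F'}(\KK))=P_{F'\cup\{j_r\}}(\KK),
\]
so $N^r(\KK)\subseteq P_F(\KK)$ for every $F$ of size $r$. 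Consequently, any inequality valid for some $P_F(\KK)$ with $|F|=r$ is valid for $N^r(\KK)$, which gives $r(\pi)\leq r_d(\pi)\leq k$.

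For the matching lower bound I would appeal to Remark \ref{punto}: the paper \cite{BEM_ENDM} exhibits an explicit fractional point belonging to $N^{k-1}(W_{s(k+1)+k}^k)$ that violates the rank inequality $\pi$. Hence $\pi$ is not valid for $N^{k-1}(\qstab(W_{s(k+1)+k}^k))$, which forces $r(\pi)\geq k$. Combining the three bounds yields $k\leq r(\pi)\leq r_d(\pi)\leq k$, so both ranks equal $k$, as required.

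The only genuinely non-trivial step is the lower bound: producing a certificate point that survives $k-1$ applications of the $N$-operator is where the real work sits, and that construction is imported wholesale from \cite{BEM_ENDM}. Everything else is a clean assembly of Lemma \ref{menork}, Remark \ref{punto}, and the inclusion between the $N$- and disjunctive relaxations.
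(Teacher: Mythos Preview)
Your proposal is correct and follows essentially the same approach as the paper: sandwich via $k\leq r(\pi)\leq r_d(\pi)\leq k$, with the lower bound imported from \cite{BEM_ENDM} (Remark~\ref{punto}) and the upper bound from the disjunctive rank of the web. The only cosmetic difference is that the paper cites Theorem~\ref{rdWnk} rather than Lemma~\ref{menork} for $r_d(\pi)\leq k$, and it takes the inequality-level version of $r(\pi)\leq r_d(\pi)$ for granted rather than spelling it out as you do.
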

\begin{proof}

In \cite{BEM_ENDM} it is proved the existence of a point  $\bar{x}\in N^{k-1}(W_{s(k+1)+k}^k)$ violating the rank constraint. This shows that $r(\pi)\geq k$.

From Theorem \ref{rdWnk} we have that $r_d(\pi) \leq k$. Since $r(\pi) \leq r_d(\pi) \leq k$, the result follows. 
\end{proof}

Dahl in \cite{Dahl} characterizes the facet defining inequalities of $\stab(W_n^2)$ for $n \geq 6$,  by introducing $1$-interval inequalities.

Let us consider $W_n^2$, for $n \geq 6$.
If $V = \{1, \dots, n\}$ an \emph{interval} is a subset of consecutive nodes using modulo $n$ arithmetic. For example, the set $\{n-2, n-1,n, 1\}$ is an interval. 
A set $T \subsetneq V$  is a \emph{$1$-interval set} if there is a partition of $V$ into a collection of disjoint intervals $I_1,J_1,\dots,I_t,J_t$ where $T=\bigcup_{j=1}^t I_j$ and $\left|J_j\right|=1$ for every $j= 1, \dots, t$. 

Given a $1$-interval set $T$, the \emph{$1$-interval inequality} associated with $T$  is $x(T) \leq \alpha(T)$ where  $\alpha(T)$ is the stability number of the subgraph of $W_n^2$ induced by the nodes of $T$ (see \cite{Dahl} for further details). 

\begin{theorem} [\cite{Dahl}]\label{W-n-2}
For every $n \geq 6$, $\stab(W_n^2)$ is described by 
\begin{enumerate}
	\item non-negativity constraints,
	\item clique constraints,
	\item the rank constraint when  $n$ is not a multiple of  $3$,
	\item $1$-interval inequalities associated with $T\subsetneq V$ such that $\left|I_j\right|= 1 \;\rm{ mod } \; 3$ for $j=1, \dots, t$ and $t\geq 3$ odd. 
\end{enumerate}
\end{theorem}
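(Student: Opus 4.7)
The plan is to show that for $n\ge 6$ the four listed families give a complete facet description of $\stab(W_n^2)$. I would split the argument into three parts: validity, the facet-defining property under the stated hypotheses, and completeness.

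Validity is routine. Non-negativity and clique constraints are immediate since the maximum cliques of $W_n^2$ are the triangles $\{i,i{+}1,i{+}2\}$. The rank constraint holds because $\alpha(W_n^2)=\lfloor n/3\rfloor$. For a $1$-interval inequality $x(T)\le\alpha(T)$, any stable set $S$ of $W_n^2$ intersects $T$ in a stable set of the induced subgraph $W_n^2[T]$, so $|S\cap T|\le\alpha(W_n^2[T])=\alpha(T)$.

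For the facet-defining property I would exhibit, for each candidate inequality, $n$ affinely independent stable sets of $W_n^2$ meeting it with equality; the arithmetic hypotheses are precisely what makes this possible. For the rank constraint, the exclusion $n\not\equiv 0\pmod 3$ is forced because otherwise $V$ partitions into $n/3$ disjoint triangles and the rank inequality is the plain sum of those clique constraints. For the $1$-interval inequalities, the condition $|I_j|\equiv 1\pmod 3$ for every $j$ ensures each $I_j$ admits enough maximum stable sets so that the roots project onto a full-dimensional set within each interval, while $t\ge 3$ odd supplies the circular parity that prevents the inequality from being dominated by a combination of clique, rank, and shorter $1$-interval inequalities.

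Completeness is the main obstacle. The natural route is to assume $\pi x\le\pi_0$ is a facet of $\stab(W_n^2)$ not in the listed families and analyze the collection of stable sets for which it is tight. Using the circular symmetry of $W_n^2$ one normalizes the support of $\pi$, then isolates a minimal arc of $V$ on which $\pi$ is supported and compares it with a carefully chosen $1$-interval inequality derived from that arc. For an inductive step one removes a well-chosen node or triangle and appeals to the description of $\stab(W_{n'}^2)$ for smaller $n'$, sequentially lifting coefficients and verifying that the lifted facet must have support of the prescribed form. The delicate point, where the arithmetic hypotheses on the $|I_j|$ and the parity condition on $t$ are forced, is ruling out facets whose support is a disjoint union of intervals with mixed residues modulo $3$; here one aims to exhibit a concrete convex combination of listed inequalities that dominates the candidate.
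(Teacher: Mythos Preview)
This theorem is not proved in the paper: it is quoted verbatim from Dahl~\cite{Dahl} and used as a black box (see the attribution in the theorem heading and the surrounding text). Consequently there is no ``paper's own proof'' to compare your proposal against.

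As for the proposal itself, what you have written is a plausible outline of the three blocks (validity, facet-defining, completeness), but it is not a proof. The validity part is fine. The facet-defining part is only asserted: you do not actually construct the $n$ affinely independent tight stable sets, and the sentence about ``enough maximum stable sets so that the roots project onto a full-dimensional set within each interval'' is a description of what you hope happens, not an argument that it does. The completeness part is where the real content lies, and here your text is a wish list (``one normalizes the support'', ``compares it with a carefully chosen $1$-interval inequality'', ``one aims to exhibit a concrete convex combination'') rather than a proof. In particular, the step you flag as delicate---ruling out facets whose support is a union of intervals with mixed residues modulo $3$---is exactly the nontrivial combinatorial core of Dahl's result, and nothing in your sketch indicates how you would carry it out. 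If you intend to reprove the theorem rather than cite it, you will need to supply that argument in full; otherwise, cite \cite{Dahl} as the paper does.
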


Let us first compute the disjunctive rank of the rank constraint associated with $W_n^2$. 
\begin{lemma}\label{rangodisjP1}
If $\pi$ is the rank constraint associated with $W_{3s+\ell}^2$ for $\ell\in \{0,1,2\}$, i.e.,
\[ \pi: x(V(W_n^2))\leq  s,\]
then $r(\pi) = r_d(\pi) = \ell$.
\end{lemma}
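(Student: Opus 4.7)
My plan is to split the argument into three cases according to $\ell \in \{0,1,2\}$, since the structure of the clique cover of $V(W_{3s+\ell}^2)$ changes substantially with $\ell$ and requires three different kinds of argument.

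For $\ell = 0$ (so $n = 3s$) the case is immediate: the $s$ disjoint triangles $\{3j-2, 3j-1, 3j\}$ for $j = 1,\dots,s$ partition $V(W_{3s}^2)$, and summing their clique inequalities produces $\pi$. Hence $\pi$ is already valid on $\qstab(W_{3s}^2)$, yielding $r(\pi) = r_d(\pi) = 0$.

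For $\ell = 2$ (so $n = 3s+2 = s(k+1)+k$ with $k=2$), the statement is a direct instance of Lemma \ref{fullrank} applied with $k=2$, producing $r(\pi) = r_d(\pi) = 2$.

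The only substantive case is $\ell = 1$, where $n = 3s+1$. For the lower bound I take the fractional witness $\bar x = \tfrac{1}{3}\one$: since every maximum clique of $W_n^2$ has $\omega(W_n^2) = 3$ nodes, $\bar x(Q) = 1$ holds for each clique $Q$, so $\bar x \in \qstab(W_n^2)$; but $\bar x(V) = (3s+1)/3 > s$ violates $\pi$, so $r(\pi) \geq 1$. For the upper bound I apply the disjunctive operator $P_1$ and verify that $\pi$ is valid on each of the two branches via direct clique-cover arguments. In the branch $x_1 = 0$, the $s$ disjoint triangles $\{3j-1, 3j, 3j+1\}$ for $j = 1,\dots,s$ partition $V \setminus \{1\}$, so summing their clique inequalities yields $x(V \setminus \{1\}) \leq s$, hence $x(V) \leq s$. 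In the branch $x_1 = 1$, the clique inequalities force $x_i = 0$ for $i \in N(1) = \{2, 3, 3s, 3s+1\}$, leaving the $3s - 4$ consecutive nodes $\{4, \dots, 3s-1\}$, which decompose into the $s - 2$ disjoint triangles $\{3j+1, 3j+2, 3j+3\}$ for $j = 1,\dots,s-2$ together with the trailing edge $\{3s-2, 3s-1\}$; summing these $s-1$ clique inequalities gives $x(V \setminus N[1]) \leq s - 1$, hence $x(V) = 1 + (s-1) = s$. Taking the convex hull shows $\pi$ is valid on $P_1(\qstab(W_n^2))$, so $r_d(\pi) \leq 1$; combined with $r(\pi) \leq r_d(\pi)$ and the lower bound $r(\pi) \geq 1$ this gives $r(\pi) = r_d(\pi) = 1$.

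The main obstacle is the bookkeeping in the $\ell=1$ upper bound: one has to display an explicit clique cover of $V \setminus N[1]$ whose total weight is exactly $s-1$, which is what forces the delicate balance $x(V) = 1 + (s-1) = s$ in the $x_1=1$ branch. In particular, one cannot hope to deduce this case from a bound on $r_d(W_{3s+1}^2)$ itself, since by Theorem~\ref{rdWnk} that rank is $2$ whenever $3s+1 \geq 3k+2 = 8$, so the reduction to a clique cover tailored to each disjunctive branch is essential.
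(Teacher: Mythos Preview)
Your proof is correct and follows essentially the same approach as the paper: the same clique-partition argument for $\ell=0$, the same fractional witness $\tfrac{1}{3}\one$ together with a single-vertex disjunction for $\ell=1$, and the same appeal to Lemma~\ref{fullrank} for $\ell=2$. Your treatment of $\ell=1$ is in fact more complete than the paper's, which only records that $\pi$ is valid on the face $\{x_{3s+1}=0\}$ and leaves the $\{x_{3s+1}=1\}$ branch implicit; your explicit clique cover of $V\setminus N[1]$ fills that gap.
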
 
\begin{proof}
Let us recall that $Q_i$ denotes the clique of the $k+1$ consecutive nodes in the web starting at node $i$.
Then, consider the clique constraints 
$x(Q_{3j+1})\leq 1$ for $j=0, \dots, s-1$. If we sum them up, we obtain
\begin{equation}\label{sumaQ}
\sum_{j=0}^{s-1} x(Q_{3j+1}) = \sum_{i=1}^{3s}x _i \leq s,
\end{equation}
a valid inequality for $\qstab(W_n^2)$.

If $\ell=0$ then $\pi$ is obtained by a linear combination of the clique constraints, showing that both, the disjunctive and the $N$ rank,  are equal to zero.

Now, if $\ell = 1$ the point $x=\frac{1}{3}\one \in \qstab(W_{3s+1}^2)$ violates the rank inequality $\pi$. Therefore any of the ranks is at least one. 

On the other hand, \eqref{sumaQ} is valid for $\qstab(W_{3s+1}^2)\cap \{x:x_{3s+1}=0\}$. Hence $r_d(\pi)=1$ and then $r(\pi)=1$.

Finally, if $\ell=2$ the web is $W^2_{3s+2}$ and Lemma \ref{fullrank} shows that $r(\pi)=r_d(\pi)=2$.
\end{proof}

In addition, 
\begin{theorem}\label{rangodisjP}
Every 1-interval facet defining inequality for the stable set polytope of $W_n^2$ has disjunctive and $N$-rank equal to one.
\end{theorem}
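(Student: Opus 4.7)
The plan is to establish separately the lower bound $r(\pi)\geq 1$ and the upper bound $r_d(\pi)\leq 1$; combining these with \eqref{relacion} will give $r(\pi)=r_d(\pi)=1$. Throughout I write $|I_j|=3m_j+1$, $M=\sum_j m_j$, and label the nodes cyclically as $I_1,\{b_1\},I_2,\{b_2\},\ldots,I_t,\{b_t\}$ with $I_j=\{a_j+1,\ldots,a_j+3m_j+1\}$. A short counting argument on the cyclic ``$R_j+L_{j+1}\leq 1$'' constraint on interval endpoints yields $\alpha(T)=M+(t-1)/2$.

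For $r(\pi)\geq 1$, I would exhibit $\bar x\in\qstab(W_n^2)$ with $\bar x(T)>\alpha(T)$. Setting $\bar x_i=1/2$ at every node whose index within its interval $I_j$ is $\not\equiv 0\pmod 3$, and $\bar x_i=0$ elsewhere (in particular at every singleton $b_j$), a routine case check on the triangles $\{i,i+1,i+2\}$ of $W_n^2$ shows $\bar x\in\qstab(W_n^2)$, while $\bar x(T)=M+t/2$ exceeds $\alpha(T)$ by $1/2$. Hence $\pi$ is not valid on $\qstab(W_n^2)$.

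For $r_d(\pi)\leq 1$ I would choose $v=a_t+1$, the first node of $I_t$, and verify $\pi$ on $P_v(\qstab(W_n^2))$ by separately handling $x_v=0$ and $x_v=1$. In the branch $x_v=0$, I pair the intervals as $(I_1,I_2),(I_3,I_4),\ldots,(I_{t-2},I_{t-1})$, leaving $I_t$ unpaired; for each pair the cross-boundary triangle $\{b_{2k-1}-1,b_{2k-1},b_{2k-1}+1\}$ combined with a natural size-three cover of the leftover nodes produces $x(I_{2k-1})+x(I_{2k})\leq m_{2k-1}+m_{2k}+1$, while $x_v=0$ lets a size-three cover of the remaining $3m_t$ nodes of $I_t$ give $x(I_t)\leq m_t$. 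In the branch $x_v=1$, the $W_n^2$-neighbours $\{b_{t-1},b_{t-1}-1,a_t+2,a_t+3\}$ of $v$ are all forced to $0$; I re-pair as $(I_t,I_1),(I_2,I_3),\ldots,(I_{t-3},I_{t-2})$, leaving $I_{t-1}$ unpaired. The same cross-boundary argument at $\{b_t-1,b_t,b_t+1\}$ yields $x(I_t)+x(I_1)\leq m_t+m_1+1$ (still valid with $x_v=1$), and the vanishing of $b_{t-1}-1$ yields $x(I_{t-1})\leq m_{t-1}$ via option $A$ of the clique cover. Summing the bounds in either branch gives $x(T)\leq M+(t-1)/2=\alpha(T)$.

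The main obstacle is the savings accounting. Because $t$ is odd, any clique-based pairing around the cycle leaves exactly one unpaired interval, and the resulting bound on $x(T)$ overshoots $\alpha(T)$ by precisely one unit. Choosing $v$ as an \emph{endpoint} of an interval is what supplies the missing unit in both branches: directly, by shortening the unpaired interval when $x_v=0$, and indirectly, by forcing a zero at the endpoint of an adjacent interval when $x_v=1$. Handling small intervals (especially $|I_t|=1$, where $v+2=b_t$ and $v+3=a_1+1$ so an extra endpoint of $I_1$ vanishes) and verifying that the cross-boundary bound $x(I_j)+x(I_{j+1})\leq m_j+m_{j+1}+1$ remains valid in the presence of the forced values around $v$ are the delicate bookkeeping parts of the argument.
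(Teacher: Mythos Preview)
Your argument is correct and follows essentially the same route as the paper: both proofs pair consecutive intervals $I_{j-1},I_j$ through the cross-boundary clique $\{ \text{last of }I_{j-1},\,b_{j-1},\,\text{first of }I_j\}$ to obtain $x(I_{j-1})+x(I_j)\le m_{j-1}+m_j+1$, and both exploit that an \emph{endpoint} of one interval is the right pivot to recover the missing unit on the unpaired interval.

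The differences are in presentation and completeness. The paper chooses the \emph{last} node of $I_t$ as pivot and derives directly the single inequality $x(T)\le \alpha(T)+x_v$ valid on $\qstab(W_n^2)$, then invokes the face $x_v=0$; it does not spell out the $x_v=1$ branch. You choose the \emph{first} node of $I_t$ and treat both branches explicitly, with a re-pairing in the $x_v=1$ case; this is more work but makes the disjunctive argument self-contained. For the lower bound the paper simply observes that a facet absent from $\qstab$ has rank at least one, whereas you construct an explicit fractional point; both are valid, yours is just more concrete. One small slip: in your $|I_t|=1$ discussion you write $v+2=b_t$, but with $v=a_t+1$ and $m_t=0$ one has $v+1=b_t$ and $v+2=a_1+1$; the conclusion (an endpoint of $I_1$ is forced to $0$) is unchanged.
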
 

\begin{proof} 
Let $T = \cup_{j=1}^t  I_j$ and $|I_j|=3k_j + 1$ for some integer $k_j$, $j\in\{1,\dots,t\}$ for $t$ odd. Let $\pi_T:  x(T) \leq \alpha(T)$ be the corresponding 1-interval facet defining inequality for $\stab(W^2_n)$.  Clearly, its rank is at least 1 since this facet is absent in the original relaxation of $\stab(W^2_n)$. 

If $x \in \qstab(W_n^2)$ then it satisfies $x(I_j) \leq k_j +1$ for each $j=1,\dots,t$. In addition, 
\[
x(I_{j-1}\cup I_{j}) \leq  k_{j-1} + 1 + k_{j}
\]
for every $j\in\{2,\dots,t\}$.

Therefore, since $t$ is odd, it follows that $x \in \qstab(W_n^2)$ satisfies 
\begin{equation}\label{suma}
\sum_{i\in T} x_i\leq \sum_{j=1}^{t-1} k_j +  \frac{t-1}{2} + k_t + x_{3k_t+1}.
\end{equation} 

Moreover,  
\begin{equation}\label{alfa}
\alpha(T)=\sum_{j=1}^{t} k_j +  \frac{t-1}{2}.
\end{equation} 

According to \eqref{suma} and \eqref{alfa}, if $\bar{x}\in\qstab(W_n^2)\cap \{x:x_{3k_t+1}=0\}$
then  $\bar{x}(T)\leq \alpha(T)$.

This shows that the 1-interval inequality $\pi_T$ is valid for $P_{3k_t+1}(W^2_n)$ and then $r_d(\pi_T) \leq 1$. This completes the proof since $1\leq r(\pi_T)\leq r_d(\pi_T)$.
\end{proof}

Observe that, according to Theorem \ref{W-n-2},  all the inequalities describing $\stab(W_n^2)$ are obtained in at most one step of the $N$-operator when $n$ is either $3s$ or $3s+1$ for some $s\geq 2$. 
Hence we have the following consequence.
\begin{corollary}\label{rangoNP3s}
For every $s\geq 2$, $r(W^2_{3s})=r(W^2_{3s+1})=1$. 
\end{corollary}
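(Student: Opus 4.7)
The plan is to read off $r(W_n^2)\le 1$ from Dahl's description of $\stab(W_n^2)$ given in Theorem \ref{W-n-2}, by checking that each family of facet-defining inequalities listed there has $N$-rank at most $1$. Since $r(W_n^2)\le 1$ is equivalent to saying that every facet of $\stab(W_n^2)$ is already valid on $N(\qstab(W_n^2))$, it suffices to go through the four families in Theorem \ref{W-n-2} one by one.

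First, the non-negativity constraints and the clique constraints are valid for $\qstab(W_n^2)$ itself, hence have $N$-rank $0$. Second, for $n=3s$ the rank constraint is not present in the description, so nothing has to be checked there; for $n=3s+1$ the rank constraint $x(V(W_{3s+1}^2))\le s$ has $N$-rank exactly $1$ by Lemma \ref{rangodisjP1} in the case $\ell=1$. Third, every $1$-interval facet of $\stab(W_n^2)$ has $N$-rank equal to $1$ by Theorem \ref{rangodisjP}. Combining these three observations, every facet-defining inequality of $\stab(W_n^2)$ is valid on $N(W_n^2)$, and therefore $N(W_n^2)\subseteq \stab(W_n^2)$, i.e.\ $r(W_n^2)\le 1$.

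For the reverse inequality $r(W_n^2)\ge 1$, it is enough to note that $W_n^2$ is not a perfect graph as soon as $n\ge 7$: for $n=3s+1$ the graph $W_{3s+1}^2$ contains the odd antihole $W_7^2=\overline{W_7^1}$ (or, more directly, violates its own rank constraint at $\frac{1}{3}\one$ as in Lemma \ref{rangodisjP1}), while for $n=3s$ with $s\ge 3$ the nodes $\{1,3,5,7,9\}$ induce a $5$-hole in $W_{3s}^2$. In either case $\stab(W_n^2)\ne \qstab(W_n^2)$ and so $r(W_n^2)\ge 1$, yielding equality.

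There is no real technical obstacle: the corollary is a direct bookkeeping consequence of Theorem \ref{W-n-2}, Lemma \ref{rangodisjP1} and Theorem \ref{rangodisjP}. The only point one has to be a bit careful about is the lower bound for small $n$, and verifying that the list in Theorem \ref{W-n-2} really exhausts the facets so that bounding the rank of each family implies a bound on the rank of the relaxation.
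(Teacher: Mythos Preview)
Your upper-bound argument is exactly the paper's: enumerate the facet families from Theorem~\ref{W-n-2} and invoke Lemma~\ref{rangodisjP1} and Theorem~\ref{rangodisjP} to see that each one is valid on $N(W_n^2)$, whence $N(W_n^2)=\stab(W_n^2)$.

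Your explicit lower-bound constructions, however, are both incorrect. The web $W_{3s+1}^2$ does \emph{not} contain $W_7^2$ as an induced subgraph for $s>2$: Lemma~\ref{Tro1} with $k=k'=2$ forces $n'=n$, so the only $W_{n'}^2$ sitting inside $W_n^2$ is $W_n^2$ itself. (Your parenthetical alternative via $\tfrac{1}{3}\one$ is correct and suffices.) Likewise, $\{1,3,5,7,9\}$ does \emph{not} induce a $5$-hole in $W_{3s}^2$ once $3s\ge 12$: the cyclic distance from $1$ to $9$ is then $\min(8,\,3s-8)\ge 4>2$, so $1$ and $9$ are non-adjacent and you obtain the path $P_5$ rather than $C_5$. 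The clean route to $r(W_n^2)\ge 1$ is the one the paper takes implicitly: Theorem~\ref{rangodisjP} asserts that every $1$-interval facet has $N$-rank \emph{equal} to $1$ (and Lemma~\ref{rangodisjP1} that the rank constraint of $W_{3s+1}^2$ has $N$-rank equal to $1$), so the mere presence of such a facet in Dahl's list already certifies $\stab(W_n^2)\subsetneq\qstab(W_n^2)$. Alternatively, Theorem~\ref{rdWnk} gives $r_d(W_n^2)=2>0$ for all $n\ge 8$, hence imperfection, hence $r(W_n^2)\ge 1$.
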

\begin{proof}
Using Lemma \ref{rangodisjP1} and Theorem \ref{rangodisjP} we have that the rank constraint and the $1$-interval inequalities are valid for $N(W_n^2)$ when $n\in \{3s,3s+1\}$. According to Theorem \ref{W-n-2}, these inequalities together with the inequalities in $\qstab(W_n^2)$ are enough to describe $\stab(W_n^2)$. From the fact that $\stab(W_n^2)\subseteq N(W_n^2)\subseteq \qstab(W_n^2)$ and convexity arguments, the corollary follows.
\end{proof}

Although Lemma \ref{rangodisjP1} and Theorem \ref{rangodisjP} prove that the disjunctive rank of all inequalities describing $\stab(W_n^2)$ is also one when $n\in\{3s,3s+1\}$, in Theorem \ref{rdWnk} we obtained that $r_d(W_n^2)=2$ if $n\geq 8$.

Nevertheless, from Remark \ref{punto} when $k=2$ and the previous corollary, we have:
\begin{corollary}
If $n\geq 8$, the disjunctive and the $N$-ranks  of $W_n^2$ coincide if and only if $n=3s+2$ for some $s\geq 2$.
\end{corollary}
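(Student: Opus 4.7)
The plan is to assemble three previously established results into a single equivalence. First, Theorem \ref{rdWnk} (with $k=2$) gives $r_d(W_n^2)=2$ for every $n\geq 3k+2=8$. Second, Corollary \ref{rangoNP3s} gives $r(W_{3s}^2)=r(W_{3s+1}^2)=1$ for every $s\geq 2$, which covers all $n\geq 8$ with $n\not\equiv 2 \pmod 3$. Third, Remark \ref{punto} specialized to $k=2$ (so the infinite family is $W_{s(k+1)+k}^k=W_{3s+2}^2$) gives $r(W_{3s+2}^2)=r_d(W_{3s+2}^2)=2$ for $s\geq 2$.

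From here the corollary is a straightforward case split on $n \bmod 3$. For the $(\Leftarrow)$ direction, assume $n=3s+2$ with $s\geq 2$; then Remark \ref{punto} directly yields $r(W_n^2)=r_d(W_n^2)=2$, so the two ranks coincide. For the $(\Rightarrow)$ direction I argue by contrapositive: if $n\geq 8$ and $n\neq 3s+2$, then $n$ has the form $3s$ or $3s+1$ with $s\geq 3\geq 2$, so Corollary \ref{rangoNP3s} yields $r(W_n^2)=1$ while Theorem \ref{rdWnk} yields $r_d(W_n^2)=2$, and therefore the ranks differ. I don't anticipate any serious obstacle: the only thing to verify carefully is that the hypothesis $s\geq 2$ in the invoked results is compatible with the hypothesis $n\geq 8$ for each residue class, which is immediate. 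The proof is essentially bookkeeping once the three cited results are in place.
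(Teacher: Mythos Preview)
Your proposal is correct and follows essentially the same approach as the paper: the paper derives the corollary directly from Remark \ref{punto} (for the case $n=3s+2$), Corollary \ref{rangoNP3s} (giving $r(W_n^2)=1$ when $n\in\{3s,3s+1\}$), and Theorem \ref{rdWnk} (giving $r_d(W_n^2)=2$ for $n\geq 8$), exactly the three ingredients you invoke. Your explicit check that $n\geq 8$ with $n\not\equiv 2\pmod 3$ forces $s\geq 3$ is a welcome bit of care that the paper leaves implicit.
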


\section{Joined $a$-perfect graphs}

Let us recall that $A_n^k$ denotes the complement of the web graph $W_n^{k-1}$. The rank constraint of the antiweb  $A_n^k$ 
\begin{equation}\label{antiwebcons}
 x(V(A_n^k)) \leq k
\end{equation}
is called an \emph{antiweb constraint}.

In \cite{Trotter} Trotter shows that the constraint (\ref{antiwebcons}) 
defines a facet of $\stab(A_n^k)$ if and only if $n$ and $k$ are relatively prime numbers. In this case, the antiweb is called \emph{prime} and the inequality \eqref{antiwebcons} is a \emph{prime antiweb constraint}.

Later, Wagler in \cite{AWaw} proves that  $\stab(A_n^k)$ is completely described by non-negativity constraints and prime antiweb constraints associated with subantiwebs in $A_n^k$.
In addition, the author defines several graph classes where the inequality in \eqref{antiwebcons} plays an important role.

A graph $G$ is \emph{$a$-perfect} if $\stab(G)$ is described by non-negativity and prime antiweb constraints only (see \cite{AWaw}). 
Observe that perfect graphs and antiwebs are $a$-perfect graphs. 

In this section we exhibit bounds for the disjunctive rank of  $a$-perfect graphs by means of the disjunctive rank of antiweb constraints.
 
Let us first show the following result.
\begin{theorem}\label{rdFAR}
Let $k\geq 1$ and $n \geq 2k$. If $A_n^k$ is a prime antiweb and $\pi: x(V(A_n^k))\leq k$ stands for its rank constraint, then 
\[
r_d(\pi)=n-\omega k,
\]
where $\omega=\left\lfloor \dfrac{n}{k}\right\rfloor$  is the clique number of $A_n^k$. 
\end{theorem}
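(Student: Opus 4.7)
The plan is to prove $r_d(\pi)=r$, where we write $n=\omega k+r$ with $0\leq r\leq k-1$. The argument splits into matching lower and upper bounds, and the disjunctive operator over a set $F$ requires checking validity on every ``piece'' $\qstab(A_n^k)\cap\{x:x_j=\epsilon_j\text{ for all }j\in F\}$ with $\epsilon\in\{0,1\}^F$.

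For the lower bound $r_d(\pi)\geq r$, I would use the uniform fractional vector $x^{*}\in\RR^{V}$ with $x^{*}_{i}=1/\omega$ for every $i$; since every clique of $A_n^k$ has at most $\omega$ nodes, $x^{*}\in\qstab(A_n^k)$. For any $F$ with $|F|<r$, zeroing the coordinates indexed by $F$ only strengthens clique constraints, so the resulting vector $\bar x$ lies in $\qstab(A_n^k)\cap\{x:x_j=0\text{ for all }j\in F\}\subseteq P_F(\qstab(A_n^k))$ and satisfies $\bar x(V)=(n-|F|)/\omega>(n-r)/\omega=k$, violating $\pi$. Hence $\pi$ is not valid on $P_F(\qstab(A_n^k))$ for any $F$ with $|F|<r$, giving $r_d(\pi)\geq r$.

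For the upper bound $r_d(\pi)\leq r$, I would take $F=\{1,\dots,r\}$ and verify validity of $\pi$ on each piece. When $\epsilon\equiv 0$, I would partition $V\setminus F=\{r+1,\dots,n\}$ into the arithmetic progressions $S_j=\{r+j+tk:t=0,\dots,\omega-1\}$ for $j=1,\dots,k$: each $S_j$ has $\omega$ elements whose pairwise cyclic distances lie between $k$ and $n-k\geq k$, so $S_j$ is a stable set of $W_n^{k-1}$, equivalently a clique of $A_n^k$, and summing the $k$ clique inequalities $x(S_j)\leq 1$ yields $x(V)\leq k$. When some $\epsilon_{j_0}=1$, the clique constraints of $\qstab(A_n^k)$ containing $j_0$ force $x_i=0$ for every $A_n^k$-neighbour $i$ of $j_0$, confining the support of $x$ to $\{j_0\}\cup\Gamma_{W_n^{k-1}}(j_0)$; the subgraph of $A_n^k$ induced on $\Gamma_{W_n^{k-1}}(j_0)$ is bipartite with parts $\{j_0+\ell:\ell=1,\dots,k-1\}$ and $\{j_0-\ell:\ell=1,\dots,k-1\}$, and the pairs $(j_0+\ell,j_0-(k-\ell))$ for $\ell=1,\dots,k-1$ form a perfect matching whose edges are genuine $A_n^k$-edges (each realizes cyclic distance exactly $k$, which is attainable because $n\geq 2k$). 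This matching is a cover of $\Gamma_{W_n^{k-1}}(j_0)$ by $k-1$ cliques of $A_n^k$, so $x(V)=x_{j_0}+\sum_{i\in\Gamma_{W_n^{k-1}}(j_0)}x_i\leq 1+(k-1)=k$.

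The main obstacle I expect is the $\epsilon_{j_0}=1$ case of the upper bound: one must identify the bipartite structure of the induced subgraph on the forced support, exhibit the perfect matching, and confirm that each of its pairs is an edge of $A_n^k$, where the hypothesis $n\geq 2k$ is essential. The partition argument for $\epsilon\equiv 0$ is routine but needs the same cyclic-distance bookkeeping to ensure each $S_j$ truly is a clique of $A_n^k$. With these two combinatorial verifications in hand, the bounds meet exactly at $r=n-\omega k$, yielding the claimed equality.
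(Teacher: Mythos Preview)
Your proof is correct and follows essentially the same strategy as the paper: for the lower bound you use the uniform vector $\frac{1}{\omega}\one$ with the coordinates in $F$ zeroed out (exactly as the paper does with its set $T$), and for the upper bound you choose $F$ to be $r=n-\omega k$ consecutive nodes and partition $V\setminus F$ into $k$ arithmetic progressions of step $k$, each a clique of $A_n^k$ (the paper's cliques $Q_i=\{i,i+k,\dots,i+(\omega-1)k\}$ are the same sets up to the circular shift of $F$).

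The one substantive difference is that you explicitly verify validity of $\pi$ on every piece $\qstab(A_n^k)\cap\{x_j=\epsilon_j:j\in F\}$, including those with some $\epsilon_{j_0}=1$, via the perfect-matching cover of $\Gamma_{W_n^{k-1}}(j_0)$ by $k-1$ edges of $A_n^k$. The paper's proof only treats the all-zero piece and then asserts $r_d(\pi)\leq\beta$; your argument fills in what the paper leaves implicit, and your matching construction (pairs at cyclic distance exactly $k$, which is where $n\geq 2k$ enters) is a clean way to do it. So your write-up is in fact more complete on this point, while the overall architecture of the proof is the same.
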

\begin{proof}
Let us denote $\beta=n-\omega k$. Clearly, $\beta \in \{1,\dots, k-1\}$ since $n$ and $k$ are relatively prime numbers. Let $F= \{ \omega k + 1,\dots,\omega k + \beta\}$ and consider the $k$ maximal cliques $Q_i=\{i, i + k, \dots, i + (\omega-1)k \}$ for $i\in \{1,\dots,k\}$. Then, the set $\{F, Q_1, \dots, Q_k\}$ defines a partition of $V(A_n^k)=\{1,\dots,n\}$. Also, if $x\in \qstab(A_n^k)$ then 
\[ x(V(A_n^k)) = x(F)+ \sum_{i=1}^{k} x(Q_i) \leq x(F) + k.\] 
It holds that  $\pi$ is a valid inequality for 
\[\qstab(A_n^k)\cap \{x: x_i=0 \; \text{for } i\in F\}.\]
Hence,  $r_d(\pi) \leq \beta$.   

Now, let $T \subseteq V(A_n^k)$ with $|T|=\beta - 1$ and let $\bar{x} \in \RR^{n}$ be such that
\[\bar{x}_i=\left\{
\begin{array}{ll}
0 & \text{if } i\in T,\\
&\\
\dfrac{1}{\omega} & \text{otherwise}.
\end{array}
\right.
\]

Clearly, 
\[
\bar{x}(V(A_n^k))=  (\omega k + 1)\frac{1}{\omega} = k + \frac{1}{\omega}>k.
\] 

This shows that $\bar{x} \in P_T( A_n^k)$ violates $\pi$ and then, $r_d(\pi) \geq \beta$.
This completes the proof.
\end{proof}

The theorem above allows us to present a lower bound for the disjunctive rank of $a$-perfect graphs.
\begin{corollary}\label{a-perfect}
Let $G$ be an $a$-perfect graph. If  $A_{n_i}^{k_i}$ is a prime antiweb in $G$ and $\omega_i=\omega(A_{n_i}^{k_i})$ for $i \in I$ then
\begin{equation}\label{cota-a}
r_d(G)\geq \max\left\{n_i-\omega_ik_i:  \ i \in I\right\}.
\end{equation}
\end{corollary}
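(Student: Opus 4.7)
The plan is to transfer the lower bound of Theorem~\ref{rdFAR} from each prime antiweb to the ambient graph $G$. For each $i\in I$, write $V_i=V(A_{n_i}^{k_i})$ and let $\pi_i$ denote the rank constraint $x(V_i)\leq k_i$. Because $A_{n_i}^{k_i}$ is a prime antiweb induced in $G$, Theorem~\ref{rdFAR} gives that the disjunctive rank of $\pi_i$ relative to $\qstab(A_{n_i}^{k_i})$ equals $n_i-\omega_i k_i$. Moreover $\pi_i$ is valid for $\stab(G)$, since every stable set of $G$ meets $V_i$ in a stable set of the induced antiweb, whose stability number equals $k_i$.

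The crux is a lifting-by-zeros argument comparing the disjunctive hierarchies of $\qstab(G)$ and $\qstab(A_{n_i}^{k_i})$. Given $\bar x\in\qstab(A_{n_i}^{k_i})$, extend it to $\tilde x\in\RR^{V(G)}$ by $\tilde x_j=\bar x_j$ for $j\in V_i$ and $\tilde x_j=0$ otherwise. Since $A_{n_i}^{k_i}$ is an \emph{induced} subgraph of $G$, for every clique $Q$ of $G$ the intersection $Q\cap V_i$ is a clique of the antiweb, whence $\tilde x(Q)=\bar x(Q\cap V_i)\leq 1$ and $\tilde x\in\qstab(G)$. For any $F\subseteq V(G)$ with $F'=F\cap V_i$, this embedding sends $\qstab(A_{n_i}^{k_i})\cap\{x:x_j\in\{0,1\}\text{ for }j\in F'\}$ into $\qstab(G)\cap\{x:x_j\in\{0,1\}\text{ for }j\in F\}$ (the coordinates outside $V_i$ are all zero, hence integral), and taking convex hulls carries $P_{F'}(\qstab(A_{n_i}^{k_i}))$ into $P_F(\qstab(G))$ while preserving the value of $\pi_i$.

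The conclusion then follows by contradiction. Suppose $r_d(G)<n_i-\omega_i k_i$ for some $i$, and pick $F\subseteq V(G)$ with $|F|=r_d(G)$ and $P_F(\qstab(G))=\stab(G)$, so that $\pi_i$ is valid on $P_F(\qstab(G))$. Setting $F'=F\cap V_i$ gives $|F'|\leq|F|<n_i-\omega_i k_i$, and by the lifting step $\pi_i$ would then be valid on $P_{F'}(\qstab(A_{n_i}^{k_i}))$, contradicting Theorem~\ref{rdFAR}. Hence $r_d(G)\geq n_i-\omega_i k_i$ for each $i\in I$, and taking the maximum yields \eqref{cota-a}. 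The only delicate point in the argument is checking that the zero-extension of a point of $\qstab(A_{n_i}^{k_i})$ actually remains in $\qstab(G)$; this succeeds precisely because each antiweb is an \emph{induced} subgraph of $G$, so that cliques of $G$ restrict to cliques of the antiweb.
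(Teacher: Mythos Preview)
Your proof is correct; the paper states this corollary without proof, treating it as an immediate consequence of Theorem~\ref{rdFAR}, and your zero-extension lifting argument is precisely the technique the paper employs in the proof of Theorem~\ref{lema_join}. Note incidentally that your argument never uses the hypothesis that $G$ is $a$-perfect, so the bound \eqref{cota-a} in fact holds for any graph containing the $A_{n_i}^{k_i}$ as induced subgraphs.
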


\begin{remark}
Note that in the bound given in Corollary \ref{a-perfect} we have to consider all the node induced prime antiwebs in the given graph $G$. In fact, if $A_{n'}^{k'}$ is a subgraph of $A_n^k$ and $\omega(A_{n'}^{k'}) =\omega(A_n^k)$ then, using Lemma \ref{Tro1}, it holds that $ \frac{n'}{k'} \leq \frac{n}{k} $. Therefore, 
\[
n'-\omega k' = k' (\frac{n'}{k'} - \omega) \leq k' (\frac{n}{k} - \omega) < k (\frac{n}{k} - \omega)= n - \omega k.
\]
Thus, $r_d(A_{n'}^{k'}) < r_d(x(V(A_{n}^{k}))\leq k)$.

However, if the clique numbers do not coincide the same result may not hold. For example, $A_{17}^3$ is a subantiweb of $A_{25}^4$, where $r_d(x(V(A_{25}^4))\leq 4)=1$ and $r_d(x(V(A_{17}^3))\leq 4)=2$. 
\end{remark}

In \cite{Wa_habil}, Wagler defines another graph class where its members are obtained by using the complete join operation between antiwebs.

Given two graphs $G_1=(V_1,E_1)$ and $G_2=(V_2,E_2)$, the complete join of $G_1$ and $G_2$, denoted by $G_1 \vee G_2$, is the graph having node set $V_1\cup V_2$ and edge set $E_1\cup E_2 \cup \{uv: u\in V_1, v\in V_2\}$. 

Chv\'atal, in \cite{Chv}, obtained facets of the stable set polytope of a complete join of graphs after the facets of the stable set polytopes of the corresponding graphs. More precisely, 

\begin{lemma}[\cite{Chv}] If $\pi_i: a_i\: x(V(G_i))\leq 1$ defines a facet of $\stab(G_i)$ for $i=1,2$ then
\begin{equation}\label{pi}
\pi: a_1\: x(V(G_1)) + a_2\: x(V(G_2))\leq 1
\end{equation}
defines a facet of $\stab(G_1\vee G_2)$.
\end{lemma}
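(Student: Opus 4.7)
The plan is to prove validity on the extreme points of $\stab(G_1\vee G_2)$ and then exhibit $|V_1|+|V_2|$ affinely independent stable-set incidence vectors on which $\pi$ holds with equality, so that the face cut out by $\pi$ has the right dimension. I read each $a_i$ as a coefficient vector on $V(G_i)$, so that $a_i\,x(V(G_i))$ abbreviates $\sum_{v\in V(G_i)}a_{i,v}\,x_v$. The structural observation driving everything is that in the complete join every node of $V_1$ is adjacent to every node of $V_2$, so any stable set $S$ of $G_1\vee G_2$ lies entirely inside $V_1$ or entirely inside $V_2$.

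I would first deduce validity: if $S\subseteq V_i$ is a stable set of $G_1\vee G_2$, its incidence vector $\chi^S$ vanishes on $V_{3-i}$, so the left-hand side of $\pi$ evaluated at $\chi^S$ reduces to $\sum_{v\in V(G_i)}a_{i,v}\chi^S_v\leq 1$, which holds because $\pi_i$ is valid for $\stab(G_i)$. Convexity then extends $\pi$ to the whole of $\stab(G_1\vee G_2)$.

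Next I would produce the tight vectors. Since $\pi_i$ defines a facet of $\stab(G_i)$, and $\stab(G_i)$ is full-dimensional (the empty set together with all singletons provides $|V(G_i)|+1$ affinely independent incidence vectors), for each $i\in\{1,2\}$ there exist stable sets $S^{(i)}_1,\dots,S^{(i)}_{|V(G_i)|}$ of $G_i$ whose incidence vectors are affinely independent in $\RR^{V(G_i)}$ and each of which satisfies $\pi_i$ with equality. Every $S^{(i)}_j$ is also a stable set of $G_1\vee G_2$; let $\widetilde{\chi}^{S^{(i)}_j}\in\RR^{V_1\cup V_2}$ denote its incidence vector in the join, padded by zero on the opposite part. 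All $|V_1|+|V_2|$ of these vectors lie on the face defined by $\pi$.

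The only delicate step will be to verify these extended vectors are affinely independent; this is where the facet hypothesis on each $\pi_i$ is genuinely used. Suppose
$$\sum_j\lambda_j\widetilde{\chi}^{S^{(1)}_j}+\sum_j\mu_j\widetilde{\chi}^{S^{(2)}_j}=0,\qquad \sum_j\lambda_j+\sum_j\mu_j=0.$$
Restricting to the $V_1$ coordinates gives $\sum_j\lambda_j\chi^{S^{(1)}_j}=0$; applying the coefficient vector $a_1$ to both sides, and using that each $\chi^{S^{(1)}_j}$ makes $\pi_1$ tight, forces $\sum_j\lambda_j=0$. Then affine independence of the $\chi^{S^{(1)}_j}$ inside $\RR^{V(G_1)}$ yields all $\lambda_j=0$, and symmetrically all $\mu_j=0$. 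Combined with full-dimensionality of $\stab(G_1\vee G_2)$, the existence of $|V_1|+|V_2|$ affinely independent points on $\{x:\pi(x)=1\}$ proves that $\pi$ defines a facet.
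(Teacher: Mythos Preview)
The paper does not give its own proof of this lemma; it is quoted from Chv\'atal~\cite{Chv} and used as a black box. Your argument is correct and is essentially the classical proof: use the fact that every stable set of $G_1\vee G_2$ sits entirely in one side to get validity, then lift $|V_1|+|V_2|$ tight, affinely independent incidence vectors from the two facets and check their affine independence in $\RR^{V_1\cup V_2}$ via the tightness relation $a_i\cdot\chi^{S^{(i)}_j}=1$. One remark on notation: in the paper $x(U)$ always means $\sum_{v\in U}x_v$, so as stated the $a_i$ are scalars and the $\pi_i$ are rank constraints; your reading of $a_i$ as a coefficient vector is the more general Chv\'atal statement, and your proof covers both cases without change.
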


The inequality in \eqref{pi} is called \emph{joined inequality} associated with $\pi_1$ and $\pi_2$.

A graph $G$ is \emph{joined $a$-perfect} if  $\stab(G)$ is described by non-negativity constraints and joined inequalities associated with prime antiwebs in $G$, i.e.,
\begin{equation}\label{joinanti-red}
\sum_{i=1}^t\frac{1}{\alpha(A_i)} x(V(A_i))\leq 1
\end{equation}
where $A_1, \dots, A_t$ are prime antiwebs in  $G$ such that  $A_1\vee \dots \vee A_t \subseteq G$.

Hence we have:
\begin{theorem}\label{lema_join}
Let  $G=A_1 \vee A_2$ where $A_1$ and $A_2$ are prime antiwebs. Let $\pi_i$ be the rank constraint associated with $A_i$ for $i=1,2$ and $\pi$ their joined inequality, i.e.,
\begin{equation}
\pi: \frac{1}{\alpha(A_1)}\: x(V(A_1)) + \frac{1}{\alpha(A_2)}\: x(V(A_2))\leq 1.
\end{equation}
Then, $r_d(\pi)\geq r_d(\pi_1)+ r_d(\pi_2)$.
\end{theorem}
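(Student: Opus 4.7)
The plan is to prove the lower bound directly: for every $F \subseteq V(G)$ with $|F| \leq r_d(\pi_1) + r_d(\pi_2) - 1$ I will exhibit an explicit point in $P_F(\qstab(G))$ violating $\pi$. Write $r_i := r_d(\pi_i)$, $n_i := |V(A_i)|$, $\omega_i := \omega(A_i)$, and recall $\alpha(A_i) = k_i$. Theorem~\ref{rdFAR} gives $r_i = n_i - \omega_i k_i$, equivalently $n_i = \omega_i k_i + r_i$. By padding $F$ with extra vertices if necessary, I may assume $|F| = r_1 + r_2 - 1$ and split $F = T_1 \cup T_2$ with $T_i := F \cap V(A_i)$. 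The identity $|T_1| + |T_2| = r_1 + r_2 - 1$ rules out $|T_i| \geq r_i$ for both indices simultaneously, so $r_i - |T_i| > 0$ for at least one $i$; this is the only property of $F$ I will use.

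For nonnegative weights $\mu_1, \mu_2$ with $\mu_1 + \mu_2 = 1$ to be chosen in the final step, I consider the candidate point
\[
\bar x_v := \begin{cases} 0 & \text{if } v \in F, \\ \mu_i / \omega_i & \text{if } v \in V(A_i) \setminus T_i. \end{cases}
\]
Every clique $Q$ of $G = A_1 \vee A_2$ decomposes as $Q = Q_1 \cup Q_2$ with $Q_i$ a clique of $A_i$ of size at most $\omega_i$, so $\bar x(Q) \leq \omega_1\cdot (\mu_1/\omega_1) + \omega_2\cdot(\mu_2/\omega_2) = \mu_1 + \mu_2 = 1$, which places $\bar x$ in $\qstab(G)$. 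Because $\bar x_v = 0 \in \{0,1\}$ for every $v \in F$, iterating the definition of $P_j$ immediately yields $\bar x \in P_F(\qstab(G))$ without any need to take convex combinations.

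Evaluating $\pi$ at $\bar x$ and substituting $n_i = \omega_i k_i + r_i$ produces
\[
\frac{\bar x(V(A_1))}{k_1} + \frac{\bar x(V(A_2))}{k_2} = 1 + \frac{(r_1 - |T_1|)\mu_1}{k_1 \omega_1} + \frac{(r_2 - |T_2|)\mu_2}{k_2 \omega_2},
\]
so it only remains to choose the weights so that the two trailing fractions sum to something strictly positive. If $r_i - |T_i| > 0$ for both $i$, the symmetric choice $\mu_1 = \mu_2 = 1/2$ does the job. Otherwise, say $|T_1| \geq r_1$; the setup paragraph then forces $|T_2| \leq r_2 - 1$, so $r_2 - |T_2| \geq 1$, and the extremal choice $\mu_1 = 0,\mu_2 = 1$ discards the nonpositive first fraction and keeps a strictly positive second one. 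The main obstacle is precisely this case analysis: it is where the pigeonhole between the two sides of the join is converted into the additive lower bound $r_1 + r_2$; the clique verification and the $P_F$-membership are routine because the scaling $\mu_i/\omega_i$ was picked exactly to saturate joined cliques at $1$.
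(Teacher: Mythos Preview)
Your proof is correct and rests on the same pigeonhole idea as the paper: if $|F|<r_1+r_2$ then $|F\cap V(A_i)|<r_i$ for some $i$, and a violating point can be built entirely on that side, padded by zeros on the other. The paper does exactly this, but more abstractly: having $|T_i|<r_d(\pi_i)$ means \emph{by definition} that some $\bar x\in P_{T_i}(\qstab(A_i))$ violates $\pi_i$; extending $\bar x$ by zero to $V(G)$ immediately gives a point of $P_F(\qstab(G))$ violating $\pi$. No appeal to Theorem~\ref{rdFAR}, no clique-number computation, no weights.

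Two remarks on your write-up. First, the convex-combination parameters $\mu_1,\mu_2$ and the ``symmetric'' case are superfluous: once pigeonhole gives you an index $i$ with $r_i-|T_i|>0$, the extremal choice $\mu_i=1$, $\mu_{3-i}=0$ always works, and that is precisely the paper's zero-padding move (with the explicit point $\frac{1}{\omega_i}\mathbf{1}_{V(A_i)\setminus T_i}$ from the proof of Theorem~\ref{rdFAR}). Second, because the paper's argument only uses the definition of $r_d(\pi_i)$, it actually proves the statement for the join of \emph{any} two graphs whose rank constraints are facets, not just prime antiwebs; your version, by routing through the explicit formula $r_i=n_i-\omega_i k_i$, ties itself unnecessarily to the antiweb setting.
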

\begin{proof}
Let $F \subseteq V(G)$. If $|F| < r_d(\pi_1) + r_d(\pi_2)$ then $|F \cap V(A_i)| < r_d(\pi_i)$ for some   $i\in \{1,2\}$. W.l.o.g. assume that $|F \cap V(A_1)| < r_d(\pi_1)$. 

Then, there exists
\[\bar{x} \in  \qstab (A_1 ) \cap \{x \in \RR^{V(A_1)}:x_i=0 \text{ for } i \in F \cap V(A_1) \}\] 
such that $\bar{x}$ violates $\pi_1$, or equivalently, $\bar{x}(V(A_1)) > \alpha(A_1)$.  

Consider $\tilde{x} \in \RR^{|V|}$ defined by 
\[\tilde{x}_i = \left\{
\begin{array}{lcl}
\bar{x}_i & \text{if} & i \in V(A_1) \\
0 & \text{if} & i \in V(A_2).
\end{array} \right.
\]
Clearly $\tilde{x} \in \qstab(G)$ and  
\[\frac{1}{\alpha(A_1)}\: \tilde{x}(V(A_1)) + \frac{1}{\alpha(A_2)}\: \tilde{x}(V(A_2)) =  \frac{1}{\alpha(A_1)}\: \bar{x}(V(G)) >  \frac{1}{\alpha(A_1)} \alpha(A_1),
\] 
that is, $\tilde{x}\in P_F(G)$ and it violates the inequality $\pi$. 

Therefore, $r_d(\pi)\geq r_d(\pi_1)+ r_d(\pi_2)$.
\end{proof}

This result gives a bound for the disjunctive rank of joined $a$-perfect graphs.

\begin{corollary}\label{cotajoin}
Let $G$ be a joined $a$-perfect graph and $A_{n_i}^{k_i} \subseteq G$ a prime antiweb, for every $i\in I$. If $\omega_i=\omega(A_{n_i}^{k_i})$ for $i \in I$ then 
\begin{equation}\label{cota}
r_d(G)\geq \max\left\{\sum_{i\in S}(n_i-\omega_ik_i): \bigvee_{i\in S} A_{n_i}^{k_i} \subseteq G, \text{ for } S\subseteq I\right\}.
\end{equation}
\end{corollary}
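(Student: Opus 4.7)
The plan is to reduce the corollary to two already established ingredients: Theorem~\ref{rdFAR}, which evaluates $r_d(\pi_i)=n_i-\omega_ik_i$ for the rank constraint of a prime antiweb, and Theorem~\ref{lema_join}, which I intend to extend from two antiwebs to an arbitrary complete join. Fix any admissible $S\subseteq I$, i.e., one for which $\bigvee_{i\in S}A_{n_i}^{k_i}\subseteq G$. Because $G$ is joined $a$-perfect, the joined inequality
\[
\pi_S:\;\sum_{i\in S}\frac{1}{\alpha(A_{n_i}^{k_i})}\,x(V(A_{n_i}^{k_i}))\leq 1
\]
is part of the description of $\stab(G)$, so $r_d(G)\geq r_d(\pi_S)$.

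The core step is to prove that $r_d(\pi_S)\geq \sum_{i\in S}r_d(\pi_i)$, where $\pi_i$ is the rank constraint of $A_{n_i}^{k_i}$. I would argue exactly as in Theorem~\ref{lema_join}. Given $F\subseteq V(G)$ with $|F|<\sum_{i\in S}r_d(\pi_i)$, the pigeonhole principle yields some $i_0\in S$ with $|F\cap V(A_{n_{i_0}}^{k_{i_0}})|<r_d(\pi_{i_0})$. Pick $\bar x\in\qstab(A_{n_{i_0}}^{k_{i_0}})$ with $\bar x_j=0$ for $j\in F\cap V(A_{n_{i_0}}^{k_{i_0}})$ and $\bar x(V(A_{n_{i_0}}^{k_{i_0}}))>\alpha(A_{n_{i_0}}^{k_{i_0}})$, which exists since $|F\cap V(A_{n_{i_0}}^{k_{i_0}})|<r_d(\pi_{i_0})$. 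Extend $\bar x$ by zero on $V(G)\setminus V(A_{n_{i_0}}^{k_{i_0}})$ to obtain $\tilde x\in\RR^{V(G)}$. Since $A_{n_{i_0}}^{k_{i_0}}$ is an induced subgraph of $G$, every clique $Q$ of $G$ meets $V(A_{n_{i_0}}^{k_{i_0}})$ in a clique of $A_{n_{i_0}}^{k_{i_0}}$, so $\tilde x(Q)=\bar x(Q\cap V(A_{n_{i_0}}^{k_{i_0}}))\leq 1$ and hence $\tilde x\in\qstab(G)$. Because all coordinates of $\tilde x$ indexed by $F$ are zero, $\tilde x\in P_F(G)$; and since only the $i_0$-th summand of $\pi_S$ evaluates nonzero at $\tilde x$, one obtains $\frac{1}{\alpha(A_{n_{i_0}}^{k_{i_0}})}\bar x(V(A_{n_{i_0}}^{k_{i_0}}))>1$, so $\tilde x$ violates $\pi_S$.

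Combining the bound just obtained with Theorem~\ref{rdFAR} gives
\[
r_d(G)\geq r_d(\pi_S)\geq \sum_{i\in S}r_d(\pi_i)=\sum_{i\in S}(n_i-\omega_ik_i)
\]
for every admissible $S$, and taking the maximum yields \eqref{cota}. The main delicate point is verifying that the zero-extension remains in $\qstab(G)$ despite possible additional edges of $G$ not present in the complete join $\bigvee_{i\in S}A_{n_i}^{k_i}$; this is what the induced-subgraph observation above handles, and it is exactly the place where the single-antiweb argument of Theorem~\ref{lema_join} needs to be checked rather than simply quoted.
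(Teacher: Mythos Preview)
Your proof is correct and follows the same approach as the paper's: combine Theorem~\ref{rdFAR} with Theorem~\ref{lema_join} applied to the joined inequality $\pi_S$, and then maximize over $S$. In fact you are more careful than the paper in two respects: you explicitly extend Theorem~\ref{lema_join} from two antiwebs to an arbitrary join via the pigeonhole argument, and you verify that the zero-extension $\tilde x$ lies in $\qstab(G)$ for the ambient graph $G$ (rather than only in $\qstab$ of the join) using the induced-subgraph observation---both points the paper's proof leaves implicit.
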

\begin{proof}
If $\pi$ is a nontrivial facet of $\stab(G)$ then, since $G$ is a joined $a$-perfect graph,  
\[
\pi: \sum_{i\in S}\frac{1}{\alpha(A_{n_i}^{k_i})} x(V(A_{n_i}^{k_i}))\leq 1
\]
for some $S \subseteq I$ such that $\bigvee_{i\in S} A_{n_i}^{k_i} \subseteq G$.

From Theorem \ref{rdFAR},  $r_d(A_{n_i}^{k_i})=n_i-\omega_ik_i$, for all $i \in I$. Applying Theorem \ref{lema_join} it holds that 
\[r_d(G)\geq  r_d(\pi) \geq \sum_{i\in S} (n_i-\omega_ik_i)\] and then the result follows.
\end{proof}

This last result helps us to compute bounds for the disjunctive rank of larger classes of graphs, such as near-bipartite graphs and their complements, the quasi-line graphs.
A graph $G$ is \emph{near-bipartite} if the graph obtained after deleting any node and all its neighbors, is a bipartite graph.  
If $G$ is  near-bipartite, its complement has the property that the neighborhood of any of its nodes can be partitioned into two cliques. The graphs with this property are called \emph{quasi-line} graphs.

Using the results due to Shepherd in \cite{She95} we have that near-bipartite graphs are joined $a$-perfect graphs.
\begin{theorem} [\cite{She95}] The only nontrivial facets describing the stable set polytope of a near-bipartite graph are inequalities associated with  join of cliques and prime antiwebs in $G$.
\end{theorem}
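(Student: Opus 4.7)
The statement is Shepherd's structural description of $\stab(G)$ for near-bipartite $G$, and the proof is nontrivial. My plan follows the usual template for facet-characterization theorems on structured graph classes: take an arbitrary nontrivial facet of $\stab(G)$, reduce to a full-support case, and then extract the required decomposition from the family of tight stable sets.

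First, I would take a facet-defining inequality $\pi\colon ax\leq b$ of $\stab(G)$, which after the standard normalization may be assumed to satisfy $a\geq \zero$ (since $\stab(G)\subseteq [0,1]^V$), and then restrict attention to $H=G[\supp(a)]$. The class of near-bipartite graphs is closed under induced subgraphs, since deleting vertices preserves the property that $G-\Gamma(v)-\{v\}$ is bipartite for every $v$; so $H$ is again near-bipartite, and it suffices to show that $\pi$ arises on $H$ as the joined inequality associated with a complete join of cliques and prime antiwebs.

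Next, I would analyze how the tight stable sets of $\pi$ interact with the near-bipartite structure. For every $v\in V(H)$, the subgraph $H-\Gamma(v)-\{v\}$ is bipartite, and hence perfect, so $\stab$ of this subgraph is completely described by clique constraints. Pushing this information through the $|V(H)|$ affinely independent tight stable sets guaranteed by facetness, one can show that the coefficient vector $a$ is constant on the ``swap orbits'' of vertices, and that these orbits correspond to the factors of a complete join decomposition $H=H_1\vee\cdots\vee H_t$. A separate argument, combining Wagler's characterization of $a$-perfect graphs with Trotter's Lemma~\ref{Tro1} (to reduce imprime antiwebs to prime sub-antiwebs without losing facetness), shows that each indecomposable rank-facet-producing near-bipartite graph $H_i$ is either a clique or a prime antiweb, and the facet is the associated joined inequality.

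The main obstacle is the join-decomposition step itself: proving that every non-adjacency in $H$ is explained by two vertices lying in a common antiweb factor, rather than by some other obstruction. This requires a delicate analysis of how odd antiholes embed in near-bipartite graphs, exploiting that the complement is quasi-line, so that the neighborhood of every vertex partitions into two cliques; this combinatorial core is the heart of Shepherd's original proof in \cite{She95}, and I would ultimately rely on that argument rather than trying to reinvent it.
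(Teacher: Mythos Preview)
The paper does not prove this theorem at all: it is stated with the citation \cite{She95} and used as a black box to conclude that near-bipartite graphs are joined $a$-perfect, so there is no proof in the paper to compare your proposal against.

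Your outline is a reasonable high-level template for how such characterization theorems are proved, and you correctly identify that the hard step is the join-decomposition of the support into clique and prime-antiweb factors. But note that your proposal is not really a proof either: you explicitly defer the combinatorial core (``this combinatorial core is the heart of Shepherd's original proof in \cite{She95}, and I would ultimately rely on that argument'') to the very reference being cited. So both you and the paper are, in the end, invoking \cite{She95} rather than proving the result. If the intent was to supply an independent proof, the gap is precisely the step you flagged; if the intent was merely to match the paper, then the paper's ``proof'' is simply the citation, and no argument is expected.
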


As a consequence of Theorem \ref{complemento} we can extend the result obtained in Corollary \ref{cotajoin} to  quasi-line graphs. 

\section{Conclusions} 

In this paper we have exactly determined the disjunctive rank of all webs, and thus, according to Theorem \ref{gra}, of their complements, the antiwebs.
Although, in general, the $N$-operator is much stronger than the disjunctive operator, we give evidence that they do not differ \emph{too much} in the family of web graphs.
In fact, we have presented an infinite family of web graphs where they coincide, and, when $n$ is large enough, they can differ in at most one unit (see Corollary \ref{cotaN}.

The importance of this result relies on the fact that computing the disjunctive rank of a graph is easier than the $N$-rank and after applying the disjunctive procedure the convex obtained preserves the combinatorial properties of the problem.

In addition we have exactly determined the disjunctive rank of antiweb inequalities for which the complexity of the separation problem is still unknown.
The importance of the result in Theorem \ref{rdFAR} is that we have identified the set of indices where we can apply the disjunctive operator for finding an antiweb inequality as a valid inequality for the stable set polytope.

Finally, the result in Corollary \ref{cotajoin} helps us to compute bounds for the disjunctive rank of larger classes of graphs, such as near-bipartite graphs and their complements, the quasi-line graphs.


\begin{thebibliography}{99}

\bibitem{ANE2}  N. Aguilera, G. Nasini, M. Escalante, \emph{A generalization
of the perfect graph theorem under the disjunctive index},
Mathematics of Operations Research Vol. \textbf{27}, No 3, (2002), 460--469.


\bibitem{BCC}  E. Balas, S. Ceria, G. Cornu\'ejols,\emph{A lift and project
cutting plane algorithm for mixed 0-1 programs}, Mathematical Programming,
\textbf{58}, (1993),  295--324.

\bibitem{Be60}  Berge C., F\"{a}rbung von Graphen deren s\"{a}mtliche bzw. deren ungerade Kreise starr sind (Zusammenfassung), Wissenschaftliche
Zeistschirft, Martin Luhter Universit\"{a}t Halle-Wittenberg,
Mathematisch-Naturwissenchaftliche Reihe (1961) 114--115.


\bibitem{BEM09} S. Bianchi, M. Escalante, M.S. Montelar, \emph{Strength of facets for the set covering and set packing polyhedra on circulant matrices}, Electronic Notes in Discrete Mathematics \textbf{35}, (2009),  109--114.

\bibitem{BEM_ENDM} S. Bianchi, M. Escalante, M.S. Montelar, \emph{A comparison between lift-and-project indices and imperfection ratio on web graphs}, Electronic Notes in Discrete Mathematics \textbf{36}, (2010),  1081--1088.

\bibitem{BEM_13} S. Bianchi, M. Escalante, M.S. Montelar, \emph{A note on the disjunctive index of joined
$a$-perfect graphs}, Electronic Notes in Discrete Mathematics. \textbf{44}, (2013), 327--332.

\bibitem{BEN} S. Bianchi, M. Escalante, G. Nasini, \emph{On the behavior of the $N_+$-operator under blocker duality},
Discrete Applied Mathematics \textbf{159}, (2011), 396--399.

\bibitem{Chud} M. Chudnovsky, N. Robertson, P. Seymour, R. Thomas, \emph{The Strong
Perfect Graph Theorem}, Annals of Mathematics \textbf{164}, (2006),  51--229.


\bibitem{Chv} V. Chv\'atal, \emph{On certain polytopes associated with graphs}, Journal of Combinatorial Theory B \textbf{18}, (1975),  138--154.

\bibitem{GPW} S. Coulonge, A. P\^{e}cher, A. Wagler, \emph{ Characterizing and bounding the imperfection ratio for some graph classes}, Math. Programming A \textbf{118}, (2009), 37--46.

\bibitem{Dahl}G. Dahl, \emph{Stable set polytopes for a class of circulant graphs}, SIAM Journal of Optimization \textbf{9}, (1999), 493--503.   

\bibitem{PF} P. Fekete \emph{Operadores lift-and-project sobre el problema del m\'aximo conjunto estable en un grafo} 
Tesis de doctorado - Universidad Nacional de Rosario, (2013).

\bibitem{GaSa}  A. Galluccio, A. Sassano, \emph{The Rank Facets of the Stable Set Polytope for
Claw-Free Graphs}, Journal of Combinatorial Theory, Series B, \textbf{69}, (1997),  1--38.



\bibitem{GiTro}  R. Giles, L.E. Trotter, jr., \emph{On Stable Set Polyhedra for $K_{1,3}$-free Graphs}, Journal of Combinatorial Theory, Series B, \textbf{31}, (1981), 313--326.


\bibitem{LoSch91}L. Lov\'{a}sz, A. Schrijver,  \emph{Cones of matrices and set-functions and 0-1 optimization}, SIAM J. Optimization
\textbf{1}, (1991),  166--190.

\bibitem{LiTu03} L. Lipt\'ak and L. Tun\c{c}el,
Stable set problem and the lift-and-project ranks
of graphs, \emph{Mathematical Programming} \textbf{98}, (2003), 319--353.

\bibitem{Nasini} G. Nasini, \emph{La imperfecci\'{o}n de un grafo y su complemento}, Anales de las XXX JAIIO-SIO'01, (2001), 102--106.


\bibitem{oriolo} G. Oriolo, \emph{Clique-family inequality for the stable set polytope for quasi-line graphs}, Special Issue on Stability Problems, Discrete Applied Mathematics \textbf{132}, (2003), 185--201.


\bibitem{She95}
F. B. Shepherd, \emph{Applying Lehman's theorems to packing problems},
\emph{Math. Programming} 71, (1995) 353--367.


\bibitem{Trotter} L.E. Trotter, \emph{ A class of facet producing graphs for vertex packing polyhedra}, Discrete Mathematics \textbf{12}, (1975), 373--388.
 
\bibitem{Tuncel_bib}L. Tun\c{c}el, \emph{Polyhedral and Semidefinite Programming Methods in Combinatorial Optimization}, Fields Institute Monograph Series, AMS (2008), to appear.



\bibitem{AWaw} A. Wagler,  \emph{ Antiwebs are rank-perfect.}  4OR \textbf{2} (2004) 149--152.

\bibitem{Wa_habil} A. Wagler, \emph{Beyond perfection: On relaxations of superclasses}, Habilitation Thesis, Magdeburg University, Germany (2006). 

\end{thebibliography}
\end{document}